\providecommand\@dotsep{5}
\def\listtodoname{List of Todos}
\def\listoftodos{\@starttoc{tdo}\listtodoname}
\numberwithin{equation}{section}
\DeclareMathOperator*{\esssup}{ess\,sup}
\DeclareMathOperator*{\essinf}{ess\,inf}
\newcommand{\Om} {\Omega}
\newtheorem{Theorem}{Theorem}[section]
\newtheorem{Lemma}[Theorem]{Lemma}
\newtheorem{Corollary}[Theorem]{Corollary}
\newtheorem{Remark}[Theorem]{Remark}
\newtheorem{Definition}[Theorem]{Definition}
\newcommand\R{\mathbb R}
\begin{document}

\title[Mixed local and nonlocal parabolic problem]
{Weak Harnack inequality for a mixed local and nonlocal parabolic equation}
\author{Prashanta Garain and Juha Kinnunen}

\address[Prashanta Garain ]
{\newline\indent Department of Mathematics
	\newline\indent
Ben-Gurion University of the Negev
	\newline\indent
P.O.B. 653
\newline\indent
Beer Sheva 8410501, Israel
\newline\indent
Email: {\tt pgarain92@gmail.com} }

\address[Juha Kinnunen ]
{\newline\indent Department of Mathematics
\newline\indent
Aalto University
\newline\indent
P.O. Box 11100, FI-00076, Finland
\newline\indent
Email: {\tt juha.k.kinnunen@aalto.fi} }

\begin{abstract}
This article proves a weak Harnack inequality with a tail term for sign changing  supersolutions of a mixed local and nonlocal parabolic equation. 
Our argument is purely analytic. It is based on energy estimates and the Moser iteration technique. 
Instead of the parabolic John-Nirenberg lemma, we adopt a lemma of Bombieri to the mixed local and nonlocal parabolic case. 
To this end, we prove an appropriate reverse H\"older inequality and a logarithmic estimate for weak supersolutions. 
\end{abstract}

\subjclass[2010]{35R11, 35K05, 35B65, 47G20, 35D30.}
\keywords{Mixed local and nonlocal Laplace operator, energy estimates, Moser iteration, reverse H\"older inequality, weak Harnack inequality.}

\maketitle

\section{Introduction}
In this article, we establish a weak Harnack inequality for the mixed local and nonlocal parabolic Laplace equation
\begin{equation}\label{Problem}
\partial_t u+\mathcal{L}u(x,t)=\Delta u(x,t)\text{ in }\Om\times(0,T),
\end{equation}
where $T>0$, $\Om\subset\mathbb{R}^N$ with $N\geq 2$ is a bounded domain (i.e. bounded, open and connected set) and $\mathcal{L}$ is an integro-differential operator of the form
\begin{equation}\label{fLap}
\mathcal{L} u(x,t)=\text{P.V.}\,\int_{\mathbb{R}^N}\big(u(x,t)-u(y,t)\big)K(x,y,t)\,dx\,dy\,dt,
\end{equation}
where $\text{P.V.}$ denotes the principal value and $K$ is a symmetric kernel in $x$ and $y$ such that for some  $0<s<1$ and $\Lambda\geq 1$, we have
\begin{equation}\label{kernel}
\frac{\Lambda^{-1}}{|x-y|^{N+2s}}\leq K(x,y,t)\leq\frac{\Lambda}{|x-y|^{N+2s}}
\end{equation}
uniformly in $t\in(0,T)$. 
If
$$
K(x,y,t)=\frac{C}{|x-y|^{N+2s}},
$$
for some constant $C$, then $\mathcal{L}$ reduces to the well known fractional Laplace operator $(-\Delta)^s$ and \eqref{fLap} is the mixed local and nonlocal fractional heat equation
\begin{equation}\label{mpl}
\partial_t u+(-\Delta)^s u=\Delta u.
\end{equation}
This kind of evolution equations arise in the study of L\'evy processes, image processing etc, see Dipierro-Valdinoci \cite{DV21} and the references therein for more details on the physical interpretation. 

In the elliptic case, Foondun \cite{Fo} have obtained Harnack and local H\"older continuity estimates for the mixed local and nonlocal problem
\begin{equation}\label{mel}
-\Delta u+(-\Delta)^s u=0.
\end{equation}
Chen-Kim-Song-Vondra\v{c}ek in \cite{CKSV} have proved Harnack estimates for \eqref{mel} by a different approach. 
In addition to symmetry results and strong maximum principles, several other qualitative properties of solutions of \eqref{mel} have recently been studied by Biagi-Dipierro-Valdinoci-Vecchi \cite{BSVV2, BSVV1}, Dipierro-Proietti Lippi-Valdinoci \cite{DPV20, DPV21} and Dipierro-Ros-Oton-Serra-Valdinoci \cite{DRXJV20}. 
For a nonlinear version of  \eqref{mel} with the $p$-Laplace equation, Harnack inequality, local H\"older continuity and other regularity results are discussed in Garain-Kinnunen \cite{GK}. 
In the parabolic case Barlow-Bass-Chen-Kassmann \cite{BBCK} have obtained Harnack inequality for \eqref{mpl}. 
Chen-Kumagai \cite{CK} have also proved a Harnack inequality and local H\"older continuity. 
For the fractional heat equation $\partial_t u+(-\Delta)^s u=0$, a weak Harnack inequality for globally nonnegative solutions is established by Felsinger-Kassmann \cite{Kassweakharnack}, see also Bonforte-Sire-V\'{a}zquez \cite{Vazquez}, Caffarelli-Chan-Vasseur \cite{Cafchanvas}, Chaker-Kassmann \cite{Kassmanchaker}, Kassmann-Schwab \cite{KassmanSch} and Kim \cite{Kim} for related results. 

The main purpose of this article is to provide a weak Harnack inequality for \eqref{Problem} (Theorem \ref{mainthm}). 
To the best of our knowledge, a weak Harnack inequality is unknown even for the prototype equation \eqref{mpl}.
Our main result is stated for sign changing weak supersolutions of \eqref{Problem}.
For sign changing solutions of nonlocal problems, in both the elliptic and parabolic context, an extra quantity referred to as "Tail" or "Parabolic Tail",  generally appears in the Harnack estimates. This phenomenon has been first observed by Kassmann in \cite{KassmanHarnack} for the fractional Laplace equation $(-\Delta)^s u=0$ and further extended by Di Castro-Kuusi-Palatucci \cite{Kuusilocal, Kuusiharnack} and Brasco-Lindgren-Schikorra \cite{BrascoLind, BLS} to the fractional $p$-Laplace equation. For the parabolic nonlocal case, see Str\"omqvist \cite{Martin}, Brasco-Lindgren-Str\"omqvist \cite{BLM}, Banerjee-Garain-Kinnunen \cite{BGK}, Ding-Zhang-Zhou \cite{DZZ} and the references therein. 
For the mixed local and nonlocal elliptic equations, a new tail quantity appears that captures both the local and nonlocal behavior of the mixed operator as observed in \cite{GK}. 
For the mixed local and nonlocal parabolic problem \eqref{Problem}, we introduce an appropriate tail term (Definition \ref{def.tail}), which captures both local and nonlocal behavior of the mixed equation.

In contrast to the probabilistic approach in \cite{BBCK, CK}, we prove the weak Harnack estimate (Theorem \ref{mainthm}) by analytic techniques. More precisely, we employ the approach of Moser \cite{Moser71} that uses a lemma by Bombieri-Giusti \cite{BomGi}, further avoiding the use of technically demanding parabolic John-Nirenberg lemma, see Moser \cite{Moser64} and Fabes-Garofalo \cite{FaGa}. We discuss energy estimates for negative and positive powers of weak supersolutions for \eqref{Problem} (Lemma \ref{inveng} and Lemma \ref{supenergy1}). 
These energy estimates together with the Sobolev inequality and the Moser iteration technique, enable us to estimate the supremum of the negative power of a weak supersolution (Lemma \ref{invlemma}) of \eqref{Problem} and to prove the reverse H\"older inequality (Lemma \ref{revHolderlemma}). 
Finally, a logarithmic estimate (Lemma \ref{Logestimatelemma}) of weak supersolutions of \eqref{Problem} is deduced that allows us to apply the Bombieri lemma (Lemma \ref{Bombieri}) to establish our main result (Theorem \ref{mainthm}).

\section{Preliminaries and main results}
We use the following notation throughout.
We denote the positive and negative parts of $a\in\R$ by $a_+=\max\{a,0\}$ and $a_-=\max\{-a,0\}$, respectively. 
The Lebesgue outer measure of a set $S$ is denoted by $|S|$.
The barred integral sign denotes the corresponding integral average. 
We write $C$ to denote a constant which may vary from line to line or even in the same line. If $C$ depends on $r_1,r_2,\ldots, r_k$, we denote $C=C(r_1,r_2,\ldots,r_k)$.

We recall some known results for the fractional Sobolev spaces, see Di Nezza-Palatucci-Valdinoci \cite{Hitchhiker'sguide} for more details.
\begin{Definition}
Let $0<s<1$ and assume that $\Omega\subset\mathbb{R}^N$ is an open and connected subset of $\mathbb R^N$. 
The fractional Sobolev space $W^{s,2}(\Omega)$ is defined by
$$
W^{s,2}(\Omega)=\left\{u\in L^2(\Omega):\frac{|u(x)-u(y)|}{|x-y|^{\frac{N}{2}+s}}\in L^2(\Omega\times \Omega)\right\}
$$
and it is endowed with the norm
$$
\|u\|_{W^{s,2}(\Omega)}=\left(\int_{\Omega}|u(x)|^2\,dx+\int_{\Omega}\int_{\Omega}\frac{|u(x)-u(y)|^2}{|x-y|^{N+2s}}\,dx\,dy\right)^\frac{1}{2}.
$$
{The fractional Sobolev space with zero boundary values is defined by}
$$
W_{0}^{s,2}(\Omega)={\left\{u\in W^{s,2}(\mathbb{R}^N):u=0\text{ in }\mathbb{R}^N\setminus\Omega\right\}}.
$$
\end{Definition}

Both $W^{s,2}(\Omega)$ and $W_{0}^{s,2}(\Omega)$ are reflexive Banach spaces, see \cite{Hitchhiker'sguide}. 
We denote the classical Sobolev space by $W^{1,2}(\Om)$.
The parabolic Sobolev space $L^2(0,T;W^{1,2}(\Omega))$, $T>0$,
consists of measurable functions $u$ on $\Omega\times(0,T)$ such that 
\begin{equation}\label{fpnorm}
||u||_{L^2(0,T;W^{1,2}(\Omega))}=\left(\int_{0}^{T} ||u(\cdot,t)||^2_{W^{1,2}(\Omega)}\,dt\right)^\frac{1}{2}<\infty.
\end{equation}
The space $L^2_{\mathrm{loc}}(0,T;W^{1,2}_{\mathrm{loc}}(\Omega))$ is defined by requiring the conditions above for every $\Omega' \times [t_1,t_2]\Subset \Omega \times (0,T)$. Here $\Omega'\times [t_1,t_2]\Subset \Omega \times (0,T)$ denotes that $\overline{\Omega'}\times [t_1,t_2]$ is a compact subset of $\Omega \times (0,T)$.

The next result asserts that the classical Sobolev space is continuously embedded in the fractional Sobolev space, see \cite[Proposition 2.2]{Hitchhiker'sguide}.
The argument applies an extension property of $\Omega$ so that we can extend functions from $W^{1,2}(\Omega)$ to $W^{1,2}(\R^N)$ and that the extension operator is bounded.

\begin{Lemma}\label{locnon}
Let $\Omega$ be a smooth bounded domain in $\mathbb{R}^N$ and $0<s<1$. 
There exists a positive constant $C=C(\Omega,N,s)$ such that
$$
||u||_{W^{s,2}(\Omega)}\leq C||u||_{W^{1,2}(\Omega)}
$$
for every $u\in W^{1,2}(\Omega)$.
\end{Lemma}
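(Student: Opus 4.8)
\emph{Plan of proof.} The strategy is to reduce the inequality to a computation for functions defined on all of $\R^N$, and then to split the Gagliardo double integral according to whether the two points are close together or far apart. Since $\|u\|_{L^2(\Om)}\le\|u\|_{W^{1,2}(\Om)}$ trivially, it suffices to bound the seminorm
$[u]^2:=\int_\Om\int_\Om\frac{|u(x)-u(y)|^2}{|x-y|^{N+2s}}\,dx\,dy$ by $C\|u\|_{W^{1,2}(\Om)}^2$. Because $\Om$ is a smooth bounded domain it is an extension domain, so I fix an extension $v\in W^{1,2}(\R^N)$ of $u$ with $\|v\|_{W^{1,2}(\R^N)}\le C(\Om,N)\|u\|_{W^{1,2}(\Om)}$. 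Since $v=u$ on $\Om$, we have $[u]^2\le\int_\Om\int_\Om\frac{|v(x)-v(y)|^2}{|x-y|^{N+2s}}\,dx\,dy$, and it is enough to estimate this last quantity by $C(\Om,N,s)\|v\|_{W^{1,2}(\R^N)}^2$.

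For the contribution of the region $\{|x-y|\ge1\}$ I would use $|x-y|^{-N-2s}\le1$ together with $|v(x)-v(y)|^2\le2|v(x)|^2+2|v(y)|^2$ and $|\Om|<\infty$, which gives a bound by $C|\Om|\,\|v\|_{L^2(\R^N)}^2$. For the region $\{|x-y|<1\}$ I would first record, for $v\in C_c^\infty(\R^N)$, the elementary identity $v(x)-v(y)=\int_0^1\nabla v\bigl(y+t(x-y)\bigr)\cdot(x-y)\,dt$, so that by Jensen's inequality $|v(x)-v(y)|^2\le|x-y|^2\int_0^1|\nabla v(y+t(x-y))|^2\,dt$. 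Substituting $z=x-y$, enlarging the $y$-integral to all of $\R^N$ (the integrand is nonnegative), and applying Fubini together with the translation invariance $\int_{\R^N}|\nabla v(y+tz)|^2\,dy=\|\nabla v\|_{L^2(\R^N)}^2$, the near part is dominated by $\|\nabla v\|_{L^2(\R^N)}^2\int_{|z|<1}|z|^{2-N-2s}\,dz$. The remaining integral is finite \emph{precisely because} $s<1$: in polar coordinates it equals a dimensional constant times $\int_0^1\rho^{1-2s}\,d\rho$. A density argument (smooth compactly supported functions are dense in $W^{1,2}(\R^N)$) then upgrades the near-part estimate to general $v\in W^{1,2}(\R^N)$. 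Adding the two regions and inserting the extension bound yields $\|u\|_{W^{s,2}(\Om)}\le C(\Om,N,s)\|u\|_{W^{1,2}(\Om)}$.

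The only genuinely delicate point is routing the argument through the extension operator. The fundamental-theorem-of-calculus bound on $|v(x)-v(y)|$ uses the gradient along the entire segment joining $x$ and $y$, which for $x,y\in\Om$ need not remain inside $\Om$; working directly with $u$ would therefore force an appeal to quasiconvexity of $\Om$ and a chaining of segments. Replacing $u$ by its global extension $v$ sidesteps this completely, at the price of a constant depending on $\Om$. Everything else — the close/far splitting and the polar-coordinate integrability check — is routine.
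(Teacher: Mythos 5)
Your proof is correct and takes exactly the route the paper itself indicates: the paper proves this lemma by citing \cite[Proposition 2.2]{Hitchhiker'sguide} and remarking that the argument rests on the bounded extension operator $W^{1,2}(\Omega)\to W^{1,2}(\mathbb{R}^N)$, after which the Gagliardo seminorm is split into a far region (controlled by the $L^2$ norm and $|\Omega|<\infty$) and a near region (controlled via the fundamental theorem of calculus and the integrability of $|z|^{1-2s}$ near the origin, i.e.\ $s<1$). Your write-up, including the density/Fatou upgrade and the observation that the extension sidesteps segments leaving $\Omega$, is a complete and accurate rendering of that standard argument.
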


The following result for the fractional Sobolev spaces with zero boundary value follows from \cite[Lemma $2.1$]{Silvaarxiv}.
The main difference compared to Lemma \ref{locnon} is that the result holds for any bounded domain, 
since for the Sobolev spaces with zero boundary value, we always have a zero extension to the complement.

\begin{Lemma}\label{locnon1}
Let $\Omega$ be a bounded domain in $\mathbb{R}^N$ and $0<s<1$. 
There exists a positive constant $C=C(N,s,\Omega)$ such that
\[
\int_{\mathbb{R}^N}\int_{\mathbb{R}^N}\frac{|u(x)-u(y)|^2}{|x-y|^{N+2s}}\,dx\, dy
\leq C\int_{\Omega}|\nabla u|^2\,dx
\]
for every $u\in W_0^{1,2}(\Omega)$.
Here we consider the zero extension of $u$ to the complement of $\Omega$.
\end{Lemma}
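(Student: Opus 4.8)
The plan is to split the double integral over $\R^N\times\R^N$ according to whether $|x-y|$ is small or large, using the threshold $|x-y|=1$, and to estimate the two pieces separately.

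For the near-diagonal part $\{|x-y|<1\}$, I would invoke the standard difference-quotient estimate for Sobolev functions. Writing $h=y-x$ and using $u(x)-u(x+h)=-\int_0^1\nabla u(x+th)\cdot h\,dt$ together with Jensen's inequality and Fubini's theorem gives $\int_{\R^N}|u(x)-u(x+h)|^2\,dx\le |h|^2\int_{\R^N}|\nabla u|^2\,dx$ for every $h\in\R^N$; here we use that the zero extension of $u$ lies in $W^{1,2}(\R^N)$. Multiplying by $|h|^{-N-2s}$ and integrating over $\{|h|<1\}$, the radial integral $\int_{\{|h|<1\}}|h|^{2-N-2s}\,dh$ converges because $2-2s>0$, and we obtain
\[
\int_{\R^N}\int_{\{|x-y|<1\}}\frac{|u(x)-u(y)|^2}{|x-y|^{N+2s}}\,dx\,dy\le C(N,s)\int_{\R^N}|\nabla u|^2\,dx = C(N,s)\int_{\Om}|\nabla u|^2\,dx,
\]
the last equality because $\nabla u=0$ outside $\Om$.

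For the far part $\{|x-y|\ge1\}$, I would use $|u(x)-u(y)|^2\le 2|u(x)|^2+2|u(y)|^2$ and symmetry to bound the integral by $4\int_{\R^N}|u(x)|^2\big(\int_{\{|x-y|\ge1\}}|x-y|^{-N-2s}\,dy\big)\,dx=C(N,s)\int_{\R^N}|u|^2\,dx$. Since $u$ vanishes outside $\Om$, this equals $C(N,s)\int_\Om|u|^2\,dx$, and the Poincar\'e inequality on the bounded domain $\Om$ (valid for $u\in W_0^{1,2}(\Om)$, with constant depending only on $N$ and $\Om$) yields $\int_\Om|u|^2\,dx\le C(N,\Om)\int_\Om|\nabla u|^2\,dx$. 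Adding the two estimates gives the claim with $C=C(N,s,\Om)$.

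The only point that genuinely uses the zero boundary value---and hence distinguishes this from Lemma \ref{locnon}, which requires a smooth domain and a bounded extension operator---is the control of the far part: the compact support of $u$ turns the tail integral into an $L^2(\Om)$ norm, which the Poincar\'e inequality then absorbs into the Dirichlet energy. Everything else is routine, so I do not expect a substantial obstacle; some care is only needed to keep the Poincar\'e constant depending on $\Om$ and $N$ alone.
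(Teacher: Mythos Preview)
Your argument is correct. The split at $|x-y|=1$ into a near-diagonal piece controlled by the difference-quotient bound $\|u(\cdot+h)-u\|_{L^2(\R^N)}\le|h|\,\|\nabla u\|_{L^2(\R^N)}$ and a far piece controlled by $\|u\|_{L^2(\Om)}^2$ plus Poincar\'e is the standard way to prove this embedding, and both integrals converge exactly because $0<s<1$. Your observation that the zero extension of $u\in W_0^{1,2}(\Om)$ lies in $W^{1,2}(\R^N)$ for \emph{any} bounded $\Om$ (no regularity of $\partial\Om$ needed) is the key point, and your closing remark correctly identifies why this differs from Lemma~\ref{locnon}.

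The paper itself does not supply a proof of this lemma: it simply quotes the result from \cite[Lemma~2.1]{Silvaarxiv}. Your write-up therefore adds a genuinely self-contained argument where the paper only gives a citation. One minor cosmetic point: the threshold $1$ is arbitrary, and choosing it to be $\mathrm{diam}(\Om)$ instead would make the dependence of the constant on $\Om$ completely explicit (namely through $\mathrm{diam}(\Om)^{2-2s}$ from the near part and the Poincar\'e constant from the far part), but this is not required by the statement.
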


The notion of weak supersolutions for \eqref{Problem} is defined as follows.

\begin{Definition}\label{wksoldef}
A function $u\in L^\infty(0,T;L^\infty(\mathbb{R}^N))$ is a weak supersolution of the problem \eqref{Problem}, if $u\in C(0,T;L^2_{\mathrm{loc}}(\Om))\cap L^2(0,T;W^{1,2}_{\mathrm{loc}}(\Om))$ and for every $\Om'\times[t_1,t_2]\Subset\Om\times(0,T)$ and every nonnegative test function $\phi\in W^{1,2}_{\mathrm{loc}}(0,T;L^2(\Om'))\cap L^2_{\mathrm{loc}}(0,T;W^{1,2}_{0}(\Om'))$, we have
\begin{equation}\label{wksol}
\begin{split}
&\int_{\Om'}u(x,t_2)\phi(x,t_2)\,dx-\int_{\Om'}u(x,t_1)\phi(x,t_1)\,dx-\int_{t_1}^{t_2}\int_{\Om'}u(x,t)\partial_t\phi(x,t)\,dx\,dt\\
&\quad+\int_{t_1}^{t_2}\int_{\Om'}\nabla u\nabla\phi\,dx\,dt
+\int_{t_1}^{t_2}\int_{\mathbb{R}^N}\int_{\mathbb{R}^N}\mathcal{A}\big(u(x,y,t)\big)\big(\phi(x,t)-\phi(y,t)\big)\,d\mu\,dt\geq 0,
\end{split}
\end{equation}
where
\[
\mathcal{A}\big(u(x,y,t)\big)=u(x,t)-u(y,t)
\quad\text{and}\quad
d\mu=K(x,y,t)\,dx\,dy.
\]
\end{Definition}

\begin{Remark}\label{defrmk}
By Lemma \ref{locnon} and Lemma \ref{locnon1}, we observe that the Definition \ref{wksoldef} well stated. 
Note that if $u$ is a weak supersolution of \eqref{Problem}, so is $u+c$ for any scalar $c$.
\end{Remark}

\begin{Remark}\label{Molifier}
Below we obtain energy estimates where the test functions depend on the supersolution itself. The admissibility of these test functions can be justified by using the mollification in time defined for $f\in L^1(\Omega\times (0,T))$ by
\begin{equation}\label{mol}
f_h(x,t):=\frac{1}{h}\int_{0}^{t}e^{\frac{s-t}{h}}f(x,s)\,ds.
\end{equation}
See \cite{Verenacontinuity, KLin} for more details on $f_h$.
\end{Remark}

Next, we define the parabolic tail which appears in estimates throughout the article.

\begin{Definition}\label{def.tail}
Let  $x_0\in\mathbb{R}^N$, $t_1,t_2\in(0,T)$ and $r>0$.
The parabolic tail of a weak supersolution $u$ of \eqref{Problem} (Definition \ref{wksoldef}) is
\begin{equation}\label{loctail}
\mathrm{Tail}_{\infty}(u;x_0,r,t_1,t_2)=r^2\esssup_{t_1<t<t_2}\int_{\mathbb{R}^n\setminus  B_r(x_0)}\frac{|u(y,t)|}{|y-x_0|^{N+2s}}\,dy.
\end{equation}
\end{Definition}

Now we are ready to state our main result, which asserts that a weak Harnack inequality holds for weak supersolutions of \eqref{Problem}.

\begin{Theorem}\label{mainthm}
Assume that $u$ is a weak supersolution of \eqref{Problem} such that 
$u\geq 0$ in $B_R(x_0)\times(t_0-r^2,t_0+r^2)\subset\Om\times(0,T)$.
Let $0<r\leq 1$, $r<\frac{R}{2}$ and
$$
T=\Big(\frac{r}{R}\Big)^2\mathrm{Tail}_{\infty}\big(u_-;x_0,R,t_0-r^2,t_0+r^2\big),
$$
where $\mathrm{Tail}_{\infty}$ is defined by \eqref{loctail}. Then for any $0<q<2-\frac{2}{\kappa}$, where $\kappa>2$ is given by \eqref{kappa}, 
there exists a positive constant $C=C(N,s,\Lambda,q)$ such that
\begin{equation}\label{thm1ine}
\left(\fint_{V^-(\frac{r}{2})}(u+T)^q\,dx\, dt\right)^\frac{1}{q}
\leq C\Big(\essinf_{V^+(\frac{r}{2})}\,u+T\Big),
\end{equation}
where
$
V^-\left(\tfrac{r}{2}\right)
=B_\frac{r}{2}(x_0)\times(t_0-r^2,t_0-\tfrac{3}{4}r^2)$ and
$V^+\left(\tfrac{r}{2}\right)
=B_\frac{r}{2}(x_0)\times(t_0+\tfrac{3}{4}r^2,t_0+r^2)$.
\end{Theorem}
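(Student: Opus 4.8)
The plan is to follow Moser's iteration scheme in the form due to Bombieri--Giusti, which reduces the weak Harnack inequality to three ingredients: (i) a supremum bound for negative powers of the supersolution, (ii) a reverse H\"older inequality on the relevant cylinders, and (iii) a logarithmic estimate controlling the measure of the set where $\log(u+T)$ is large. Since the paper announces Lemma \ref{invlemma}, Lemma \ref{revHolderlemma} and Lemma \ref{Logestimatelemma} as precisely these three statements, the proof of Theorem \ref{mainthm} itself is essentially the bookkeeping of feeding these into the abstract Bombieri lemma (Lemma \ref{Bombieri}). First I would normalize: by Remark \ref{defrmk} adding the constant $T$ preserves being a supersolution, so set $v=u+T$; the point of the particular choice of $T$ in terms of $\mathrm{Tail}_\infty(u_-;\dots)$ is that it absorbs the negative tail contribution, so that on the relevant inner cylinders $v$ behaves like a genuinely nonnegative supersolution and the tail terms appearing in the energy and logarithmic estimates are all dominated by $T$, hence by $v$ itself.

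The core of the argument runs as follows. From the energy estimate for negative powers (Lemma \ref{inveng}), combined with the Sobolev inequality and the embedding Lemma \ref{locnon} that lets one pass from the fractional Gagliardo seminorm back to $\|\nabla\cdot\|_{L^2}$ (so that the mixed operator genuinely helps rather than hinders), a Moser iteration on powers $v^{-p}$ yields
\[
\operatorname*{ess\,sup}_{V^+(r/2)} v^{-1}\le C\left(\fint_{V^+(\sigma r)} v^{-p_0}\,dx\,dt\right)^{1/p_0}
\]
for some small $p_0>0$ and a slightly larger cylinder; this is Lemma \ref{invlemma}. A second Moser iteration, now on small positive powers of $v$ and using the energy estimate for positive powers (Lemma \ref{supenergy1}), gives the reverse H\"older inequality of Lemma \ref{revHolderlemma}, comparing $\bigl(\fint v^{q}\bigr)^{1/q}$ to $\bigl(\fint v^{p}\bigr)^{1/p}$ for $0<p<q<2-\tfrac2\kappa$ on nested cylinders, with the exponent threshold $2-\tfrac2\kappa$ dictated precisely by the scaling in the Sobolev exponent $\kappa$ from \eqref{kappa}. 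The logarithmic estimate (Lemma \ref{Logestimatelemma}), obtained by testing the weak formulation with $\phi\sim v^{-1}\zeta^2$ and controlling the resulting nonlocal double integral via the tail, shows that $w=\log(1/v)$ (suitably centered) satisfies a parabolic BMO-type bound: the superlevel sets in the past cylinder $V^-(r/2)$ and the sublevel sets in the future cylinder $V^+(r/2)$ have exponentially small measure. This last point is where the parabolic geometry --- the fact that $V^-$ and $V^+$ are separated in time --- is essential.

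With these three inputs, I would invoke the Bombieri--Giusti lemma: it takes a nonnegative function (here $v^{-1}$ on $V^+$ and $v$ on $V^-$, or more precisely a single function across the two cylinders after the time shift) satisfying (a) a reverse H\"older / self-improving $L^p\to L^\infty$ bound on nested cylinders and (b) an exponential decay of distribution function (the $\log$ estimate), and concludes the quantitative bound
\[
\left(\fint_{V^-(r/2)} v^{q}\,dx\,dt\right)^{1/q}\le C\operatorname*{ess\,inf}_{V^+(r/2)} v,
\]
which upon substituting $v=u+T$ is exactly \eqref{thm1ine}. Concretely: apply the sup-estimate Lemma \ref{invlemma} to bound $\operatorname*{ess\,inf}_{V^+} v$ from below by a negative-power average, apply Lemma \ref{revHolderlemma} and Lemma \ref{Logestimatelemma} to run the Bombieri iteration connecting the positive-power average over $V^-$ to that negative-power average over $V^+$, and chain the inequalities. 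The constant's dependence on $N,s,\Lambda,q$ is tracked through the Sobolev constant, the kernel bounds \eqref{kernel}, and the number of Moser iteration steps, which blows up as $q\uparrow 2-\tfrac2\kappa$.

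The main obstacle, and the place where genuine work beyond the classical local case is needed, is the treatment of the nonlocal tail throughout steps (ii) and (iii): in the energy and logarithmic estimates the double integral $\int\int \mathcal{A}(v)(\phi(x)-\phi(y))\,d\mu$ does not localize, so one must split into near and far contributions, bound the far part by $\mathrm{Tail}_\infty$, and then verify that the specific choice $T=(r/R)^2\,\mathrm{Tail}_\infty(u_-;x_0,R,t_0-r^2,t_0+r^2)$ makes every such tail term $\le C\,T\le C\,v$ pointwise on the working cylinders --- so that it can be absorbed on the right-hand side rather than appearing as an additive error. Getting the scaling in $r$ and $R$ to match (using $r\le 1$, $r<R/2$) so that the iteration constants stay uniform is the delicate bookkeeping; once the tail is shown to be harmless in this sense, the rest is a faithful transcription of Moser's argument with the fractional Sobolev embedding Lemma \ref{locnon} supplying the extra coercivity for free.
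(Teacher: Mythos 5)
Your proposal is correct and follows essentially the same route as the paper: Theorem \ref{mainthm} is proved exactly by feeding Lemma \ref{invlemma}, Lemma \ref{revHolderlemma} and Lemma \ref{Logestimatelemma} into the Bombieri--Giusti lemma (Lemma \ref{Bombieri}), applied once to $w_1=e^{-b}v^{-1}$ on the forward cylinders $V^+$ and once to $w_2=e^{b}v$ on the backward cylinders $V^-$, the common centering constant $b$ cancelling when the two resulting bounds are multiplied. The only cosmetic inaccuracies in your write-up are that the logarithmic lemma delivers (and Lemma \ref{Bombieri} only requires) the polynomial decay $\lambda^{-1}$ of the level sets rather than exponential decay, and that the paper works with $v=u+T+d$ and lets $d\to 0$ at the end so that $v$ stays bounded away from zero throughout the iteration.
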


\begin{Corollary}\label{mainthmcor}
If $u\geq 0$ in $\mathbb{R}^N\times(t_0-r^2,t_0+r^2)$ in Theorem \ref{mainthm}, then \eqref{thm1ine} reduces to
\begin{equation}\label{cor}
\left(\fint_{V^-(\frac{r}{2})}u^q\,dx\,dt\right)^\frac{1}{q}\leq C\essinf_{V^+(\frac{r}{2})}\,u.
\end{equation}
\end{Corollary}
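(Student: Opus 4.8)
The plan is to derive Corollary \ref{mainthmcor} directly from Theorem \ref{mainthm} by showing that the nonnegativity of $u$ on all of $\mathbb{R}^N\times(t_0-r^2,t_0+r^2)$ forces the tail term $T$ to vanish. Indeed, the tail in \eqref{loctail} is defined through $u_-$, and if $u\geq 0$ everywhere in the relevant time slab, then $(u_-)(y,t)=0$ for every $y\in\mathbb{R}^N\setminus B_R(x_0)$ and every $t\in(t_0-r^2,t_0+r^2)$. Hence
\[
\mathrm{Tail}_{\infty}\big(u_-;x_0,R,t_0-r^2,t_0+r^2\big)
=R^2\esssup_{t_0-r^2<t<t_0+r^2}\int_{\mathbb{R}^N\setminus B_R(x_0)}\frac{(u_-)(y,t)}{|y-x_0|^{N+2s}}\,dy=0,
\]
and therefore $T=\big(\tfrac{r}{R}\big)^2\cdot 0=0$.

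With $T=0$, the left-hand side of \eqref{thm1ine} becomes $\big(\fint_{V^-(r/2)}(u+0)^q\,dx\,dt\big)^{1/q}=\big(\fint_{V^-(r/2)}u^q\,dx\,dt\big)^{1/q}$, and the right-hand side becomes $C\big(\essinf_{V^+(r/2)}u+0\big)=C\,\essinf_{V^+(r/2)}u$. This is precisely \eqref{cor}. The only hypotheses of Theorem \ref{mainthm} that need checking are that $u\geq 0$ on $B_R(x_0)\times(t_0-r^2,t_0+r^2)$ — which is implied by $u\geq 0$ on the larger set $\mathbb{R}^N\times(t_0-r^2,t_0+r^2)$ — and the structural conditions $0<r\leq 1$, $r<R/2$, $B_R(x_0)\times(t_0-r^2,t_0+r^2)\subset\Om\times(0,T)$, all of which are inherited from the statement of Theorem \ref{mainthm} and are assumed to hold in the corollary's setting. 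The constant $C=C(N,s,\Lambda,q)$ is the same one furnished by the theorem.

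There is essentially no obstacle here: the corollary is a routine specialization. The one small point worth noting is the implicit claim that $u_-$ is (or behaves like) the right object to plug into the tail — since $u$ is a weak supersolution, $u+c$ is as well by Remark \ref{defrmk}, and the definition of the parabolic tail of a supersolution in Definition \ref{def.tail} applies to any function $|u|$; when restricted to $u_-$ one is simply using that $0\le u_- \le |u|$ pointwise and that $u_-\equiv 0$ outside $B_R(x_0)$ in the slab. I would present the argument in three lines: observe $u_-\equiv0$ outside $B_R(x_0)$ on the time slab, conclude $T=0$, and substitute into \eqref{thm1ine} to obtain \eqref{cor}.
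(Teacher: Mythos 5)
Your proof is correct and is exactly the intended argument: since $u\geq 0$ on all of $\mathbb{R}^N\times(t_0-r^2,t_0+r^2)$ forces $u_-\equiv 0$ there, the tail term $T$ vanishes and \eqref{thm1ine} specializes to \eqref{cor}; the paper treats this as immediate and gives no separate proof. The only cosmetic point is that your closing remark about $u_-$ versus $|u|$ is unnecessary — all that matters is $u_-\equiv 0$ on the integration region $\mathbb{R}^N\setminus B_R(x_0)$ in the time slab.
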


We state some useful results that are needed to prove our main result (Theorem \ref{mainthm}).
The following inequalities follows from \cite[Lemma $3.3$]{Kassweakharnack}.
\begin{Lemma}\label{auxineq}
Let $a,b>0$ and $\tau_1,\tau_2\geq 0$.
\begin{enumerate}
\item[(i)] For every $\epsilon>1$, there exists a constant $C(\epsilon)=\max\left\{4,\frac{6\epsilon-5}{2}\right\}$ such that
\begin{equation}\label{ine1}
\begin{split}
(b-a)\big(\tau_1^{\epsilon+1}a^{-\epsilon}-\tau_2^{\epsilon+1}b^{-\epsilon}\big)&\geq\frac{\tau_1\tau_2}{\epsilon-1}\left[\left(\frac{b}{\tau_2}\right)^\frac{1-\epsilon}{2}-\left(\frac{a}{\tau_1}\right)^\frac{1-\epsilon}{2}\right]^2\\
&\quad-C(\epsilon)(\tau_2-\tau_1)^2\left[\left(\frac{b}{\tau_2}\right)^{1-\epsilon}+\left(\frac{a}{\tau_1}\right)^{1-\epsilon}\right],
\end{split}
\end{equation}
\item[(ii)] For every $\epsilon\in(0,1)$, there exist constants $\zeta(\epsilon)=\frac{4\epsilon}{1-\epsilon}$, $\zeta_1(\epsilon)=\frac{\zeta(\epsilon)}{6}$ and $\zeta_2(\epsilon)=\zeta(\epsilon)+\frac{9}{\epsilon}$ such that  
\begin{equation}\label{ine2}
\begin{split}
(b-a)\big(\tau_1^{2}a^{-\epsilon}-\tau_2^{2}b^{-\epsilon}\big)
\geq\zeta_1(\epsilon)\big(\tau_2 b^\frac{1-\epsilon}{2}-\tau_1 a^\frac{1-\epsilon}{2}\big)^2-\zeta_2(\epsilon)(\tau_2-\tau_1)^2\big(b^{1-\epsilon}+a^{1-\epsilon}\big).
\end{split}
\end{equation}
\end{enumerate}
\end{Lemma}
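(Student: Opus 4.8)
The plan is to prove \eqref{ine1} and \eqref{ine2} by the same three moves: normalise the parameters and split off the ``diagonal'' contribution ($\tau_1=\tau_2$); estimate the diagonal term from below by a one‑variable convexity inequality; and absorb what is left into the $(\tau_2-\tau_1)^2$‑term by Young's inequality. I would carry this out in detail for \eqref{ine1} and then indicate the changes for \eqref{ine2}.

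\emph{Normalisation and an algebraic identity (for \eqref{ine1}).} Both sides of \eqref{ine1} are invariant under the exchange $(a,\tau_1)\leftrightarrow(b,\tau_2)$, so one may assume $0\le\tau_1\le\tau_2$; if $\tau_1=0$ the inequality is immediate because $C(\epsilon)\ge4>1$, so take $\tau_1>0$ and put $A=a/\tau_1$, $B=b/\tau_2$. Then $\tau_1^{\epsilon+1}a^{-\epsilon}=\tau_1A^{-\epsilon}$, $\tau_2^{\epsilon+1}b^{-\epsilon}=\tau_2B^{-\epsilon}$ and $b-a=\tau_2B-\tau_1A$; expanding $(\tau_2B-\tau_1A)(\tau_1A^{-\epsilon}-\tau_2B^{-\epsilon})$ and adding and subtracting $\tau_1\tau_2(A^{1-\epsilon}+B^{1-\epsilon})$ gives the identity
\[
(b-a)\bigl(\tau_1^{\epsilon+1}a^{-\epsilon}-\tau_2^{\epsilon+1}b^{-\epsilon}\bigr)=\tau_1\tau_2\,(B-A)(A^{-\epsilon}-B^{-\epsilon})-(\tau_2-\tau_1)\bigl(\tau_2B^{1-\epsilon}-\tau_1A^{1-\epsilon}\bigr).
\]
Writing $u=A^{\frac{1-\epsilon}{2}}$, $v=B^{\frac{1-\epsilon}{2}}$ (so $A^{1-\epsilon}=u^2$, $B^{1-\epsilon}=v^2$) and $\delta=\tau_2-\tau_1\ge0$, the target of \eqref{ine1} reads $\text{LHS}\ge\frac{\tau_1\tau_2}{\epsilon-1}(v-u)^2-C(\epsilon)\delta^2(u^2+v^2)$.

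\emph{The diagonal inequality.} The core elementary fact is that $t^{-p}+t^{p+2}-t^2-1\ge p\,(t-1)^2$ for all $t>0$ and $p>0$. To see this, set $\chi(t)=t^{-p}+t^{p+2}-t^2-1-p(t-1)^2$; then $\chi(1)=\chi'(1)=0$ and, for $t\ge1$, $\chi''(t)=p(p+1)t^{-p-2}+(p+1)(p+2)t^p-2(p+1)\ge0$ since $(p+1)(p+2)t^p\ge(p+1)(p+2)\ge2(p+1)$ there; because $\chi(t)=t^2\chi(1/t)$, the case $0<t<1$ follows as well. Applying this with $p=\frac2{\epsilon-1}$ and $t=(B/A)^{\frac{1-\epsilon}{2}}=v/u$, and using $(B-A)(A^{-\epsilon}-B^{-\epsilon})=A^{1-\epsilon}\bigl(t^{-p}+t^{p+2}-t^2-1\bigr)$ together with $A^{1-\epsilon}(t-1)^2=(v-u)^2$, one gets
\[
\tau_1\tau_2(B-A)(A^{-\epsilon}-B^{-\epsilon})\ \ge\ \frac{2\tau_1\tau_2}{\epsilon-1}\,(v-u)^2 .
\]
One copy of $\frac{\tau_1\tau_2}{\epsilon-1}(v-u)^2$ is retained for the statement and a second copy remains available to absorb the error.

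\emph{Absorption, constants, and part \eqref{ine2}.} By the identity and the last display it suffices to prove $\frac{\tau_1\tau_2}{\epsilon-1}(v-u)^2-\delta(\tau_2v^2-\tau_1u^2)\ge-C(\epsilon)\delta^2(u^2+v^2)$. If $v\le u$ then $\tau_2v^2-\tau_1u^2\le\delta u^2$, hence $-\delta(\tau_2v^2-\tau_1u^2)\ge-\delta^2u^2$ and any $C(\epsilon)\ge1$ works. If $v>u$, write $\tau_2v^2-\tau_1u^2=\tau_1(v-u)(v+u)+\delta v^2$; Young's inequality yields $\delta\tau_1(v-u)(v+u)\le\frac{\tau_1\tau_2}{\epsilon-1}(v-u)^2+\frac{(\epsilon-1)\delta^2\tau_1}{4\tau_2}(v+u)^2$, and since $\tau_1\le\tau_2$ and $(v+u)^2\le2(u^2+v^2)$ the last term is at most $\frac{\epsilon-1}{2}\delta^2(u^2+v^2)$, giving the bound with $C(\epsilon)=\frac{\epsilon+1}{2}$, which satisfies $\frac{\epsilon+1}{2}\le\max\{4,\tfrac{6\epsilon-5}{2}\}$ for every $\epsilon>1$. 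This completes \eqref{ine1}. For \eqref{ine2} one runs the same scheme with $\epsilon\in(0,1)$, now starting from $\tau_1^2a^{-\epsilon}-\tau_2^2b^{-\epsilon}=\tau_1\tau_2(a^{-\epsilon}-b^{-\epsilon})-(\tau_2-\tau_1)(\tau_1a^{-\epsilon}+\tau_2b^{-\epsilon})$, applying the same one‑variable inequality with $p=\frac{2\epsilon}{1-\epsilon}$ and $t=b^{\frac{1-\epsilon}{2}}/a^{\frac{1-\epsilon}{2}}$ (here $1-\epsilon>0$), and estimating the error $(\tau_2-\tau_1)(b-a)(\tau_1a^{-\epsilon}+\tau_2b^{-\epsilon})$ via Young's inequality together with the monotonicity of $s\mapsto s^{-\epsilon}$. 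The one place where real care is needed is this last estimate: because the left‑hand side of \eqref{ine2} carries $\tau_i^2$ instead of $\tau_i^{\epsilon+1}$, the diagonal and error terms no longer pair up exactly, so one has to split further according to whether $t$ is close to $1$ and keep part of the diagonal term unused; tracking the constants through this extra case split is the main obstacle, and it is exactly what forces the less transparent values $\zeta_1(\epsilon)=\tfrac13\cdot\tfrac{2\epsilon}{1-\epsilon}$ and $\zeta_2(\epsilon)=\tfrac{4\epsilon}{1-\epsilon}+\tfrac9\epsilon$.
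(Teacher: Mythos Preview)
The paper does not prove this lemma at all: it simply records that the two inequalities ``follow from \cite[Lemma~3.3]{Kassweakharnack}'' and moves on. So there is no argument in the paper to compare against; what you have supplied is a genuine, self-contained proof where the paper defers to Felsinger--Kassmann.

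Your argument for \eqref{ine1} is correct. The normalisation $A=a/\tau_1$, $B=b/\tau_2$ and the algebraic identity
\[
(b-a)\bigl(\tau_1^{\epsilon+1}a^{-\epsilon}-\tau_2^{\epsilon+1}b^{-\epsilon}\bigr)=\tau_1\tau_2\,(B-A)(A^{-\epsilon}-B^{-\epsilon})-(\tau_2-\tau_1)\bigl(\tau_2B^{1-\epsilon}-\tau_1A^{1-\epsilon}\bigr)
\]
are easily verified, and the one-variable inequality $t^{-p}+t^{p+2}-t^2-1\ge p(t-1)^2$ together with the symmetry $\chi(t)=t^2\chi(1/t)$ is a clean way to obtain the factor $\frac{2}{\epsilon-1}$. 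The absorption step via Young's inequality yields the constant $\frac{\epsilon+1}{2}$, which is indeed dominated by $\max\{4,\frac{6\epsilon-5}{2}\}$ for every $\epsilon>1$, so you in fact prove a sharper version of \eqref{ine1} than stated.

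For \eqref{ine2} you give only an outline. The decomposition and the reuse of the convexity inequality with $p=\frac{2\epsilon}{1-\epsilon}$ are sound in principle, but you explicitly stop short of carrying out the case split and verifying the specific constants $\zeta_1(\epsilon)=\frac{2\epsilon}{3(1-\epsilon)}$ and $\zeta_2(\epsilon)=\frac{4\epsilon}{1-\epsilon}+\frac{9}{\epsilon}$. Since the exact values of $\zeta_1,\zeta_2$ are used later (in Lemma~\ref{supenergy1}), this is a real gap if you intend your write-up to replace the citation. Either complete the bookkeeping for (ii), or, like the paper, defer that part to \cite[Lemma~3.3]{Kassweakharnack}.
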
 


Next, we state a weighted Poincar\'e inequality that follows from \cite[Corollary $3$]{Kmnine} by a change of variables.
See also \cite[Theorem 5.3.4]{Scoste}. 
This plays an important role in the logarithmic estimate for supersolutions (Lemma \ref{Logestimatelemma}).

\begin{Lemma}\label{wgtPoin}
Let $\phi:B_r(x_0)\to[0,\infty)$ be a radially decreasing function. 
There exists a positive constant $C=C(N,\phi)$ such that
\begin{equation}\label{wgtine}
\fint_{B_r(x_0)}|u-u_{\phi}|^2\,\phi\,dx\leq C r^2\fint_{B_r(x_0)}|\nabla u|^2\phi\,dx
\end{equation}
for every $u\in W^{1,2}(B_r(x_0))$, where
$$
u_{\phi}=\frac{\int_{B_r(x_0)}u\phi\,dx}{\int_{B_r(x_0)}\phi\,dx}.
$$
\end{Lemma}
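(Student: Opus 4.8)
The inequality behaves well under translation and dilation, so the first step is to normalize the ball. After translating $x_0$ to the origin, put $v(y)=u(x_0+ry)$ and $\psi(y)=\phi(x_0+ry)$ for $y\in B_1(0)$. Then $\psi$ is again radially decreasing, $\nabla v(y)=r\,(\nabla u)(x_0+ry)$, and the Jacobian $r^{N}$ of the substitution $x=x_0+ry$ cancels in every average, so that $v_\psi=u_\phi$, $\fint_{B_1(0)}|v-v_\psi|^2\psi\,dy=\fint_{B_r(x_0)}|u-u_\phi|^2\phi\,dx$ and $\fint_{B_1(0)}|\nabla v|^2\psi\,dy=r^{2}\fint_{B_r(x_0)}|\nabla u|^2\phi\,dx$. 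Hence \eqref{wgtine} is equivalent to the scale invariant estimate $\fint_{B_1(0)}|v-v_\psi|^2\psi\,dy\le C\fint_{B_1(0)}|\nabla v|^2\psi\,dy$ with $C=C(N,\psi)=C(N,\phi)$, the factor $r^2$ in \eqref{wgtine} being precisely the rescaling of the Dirichlet energy. This normalized inequality is exactly the inequality of \cite[Corollary $3$]{Kmnine}; see also \cite[Theorem $5.3.4$]{Scoste}.

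To prove the normalized inequality directly one uses a Poincar\'e chaining adapted to the weight. By density it suffices to treat $v\in C^\infty(\overline{B_1})$, and since $v_\psi$ minimizes $c\mapsto\int_{B_1}|v-c|^2\psi\,dy$ the weighted mean may be replaced by any convenient reference value. The only way the hypothesis is used is that a radially decreasing weight has, up to null sets, concentric balls as its super level sets, so it admits the layer cake representation $\psi=\int_0^{\infty}\mathbf 1_{B_{\rho(t)}(0)}\,dt$ with $\rho$ non-increasing. This gives $\int_{B_1}|v-v_\psi|^2\psi\,dy=\int_0^{\infty}\int_{B_{\rho(t)}(0)}|v-v_\psi|^2\,dy\,dt$, and on each ball $B_{\rho(t)}(0)$ one writes $v-v_\psi=(v-v_{B_{\rho(t)}})+(v_{B_{\rho(t)}}-v_\psi)$, applies the classical unweighted Poincar\'e inequality to the first term, and uses that $v_\psi$ equals the $dt$-average, with weight $|B_{\rho(t)}(0)|$, of the means $v_{B_{\rho(t)}}$ to turn the second term into a double integral of the squared differences $|v_{B_{\rho(s)}}-v_{B_{\rho(t)}}|^2$ of means over nested balls. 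Integrating the first contribution against $dt$ reconstructs $\int_{B_1}|\nabla v|^2\psi\,dy$ (the bound $\rho(t)\le1$ supplying the constant that becomes $r^2$ after unscaling), so everything is reduced to controlling the means over nested balls.

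The crux is this last point: estimating $|v_{B_{\rho(s)}}-v_{B_{\rho(t)}}|$ when $B_{\rho(s)}$ is far smaller than $B_{\rho(t)}$, that is, near the center, where a single use of Poincar\'e would waste a power of $|B_{\rho(s)}|$. The remedy is to telescope through the dyadic scales between $\rho(s)$ and $\rho(t)$ and sum a geometric series; it is exactly here that the behaviour of $\psi$ near the origin enters, and it is why the constant is permitted to depend on $\psi$, for instance through $\|\phi\|_{L^\infty(B_r(x_0))}/\fint_{B_r(x_0)}\phi$ when $\phi$ is bounded. If $\phi$ is unbounded one first carries out the argument for the truncations $\min\{\phi,M\}$, which remain radially decreasing with finite integral, and then passes to the limit $M\to\infty$ by monotone convergence on both sides of \eqref{wgtine}.
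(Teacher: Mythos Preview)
The paper does not give a proof of this lemma at all; it merely states that the inequality ``follows from \cite[Corollary $3$]{Kmnine} by a change of variables'' and refers to \cite[Theorem 5.3.4]{Scoste}. Your first paragraph reproduces exactly this: you carry out the translation--dilation reduction to the unit ball (this is the change of variables the paper invokes) and then cite the same two references for the normalized inequality. So on the level the paper works at, your proposal is correct and identical in approach.

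Everything from the second paragraph onward is extra content the paper does not attempt: a sketch of how the cited weighted Poincar\'e inequality is actually proved, via the layer-cake representation of a radially decreasing weight, the classical Poincar\'e inequality on the level-set balls, and a dyadic chaining to control differences of means over nested balls. This outline is sound and correctly locates where the $\phi$-dependence of the constant enters (the telescoping over scales near the origin). It is, however, only a sketch---the double integral of squared mean-differences is not fully estimated, and the precise form of the constant is left implicit---so if you intend it as a self-contained proof rather than as motivation for the citation, that step would need to be written out. For the purposes of the present paper, your first paragraph alone already matches what the authors do.
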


The following version of the Gagliardo-Nirenberg-Sobolev inequality will be useful for us,
see \cite[Corollary 1.57]{Maly}.
\begin{Lemma}\label{c.omega_sobo}
Let $\Omega\subset\mathbb{R}^N$ be an open set with $\lvert\Omega\rvert<\infty$ and denote
\begin{equation}\label{kappa}
\kappa=
\begin{cases}
\frac{2N}{N-2},&\quad N>2,\\
4,&\quad N=2.
\end{cases}
\end{equation}
There exists a positive constant $C=C(N)$ such that
\begin{equation}\label{e.friedrich}
\biggl(\int_\Omega \lvert u\rvert^{\kappa }\,dx\biggr)^{\frac{1}{\kappa}}
\le C \lvert\Omega\rvert^{\frac{1}{N}-\frac{1}{2}+\frac{1}{\kappa}} \biggl(\int_\Omega \lvert \nabla u\rvert^{2}\,dx\biggr)^{\frac{1}{2}}
\end{equation}
for every $u\in W_0^{1,2}(\Omega)$.
\end{Lemma}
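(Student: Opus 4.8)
The inequality is classical, so the plan is to reduce it to the standard Sobolev--Gagliardo--Nirenberg inequality on $\mathbb{R}^N$, handling the ranges $N>2$ and $N=2$ separately. In both cases I would first establish it for $u\in C_c^\infty(\Omega)$ and then pass to general $u\in W_0^{1,2}(\Omega)$ by density, always extending $u$ by zero to $\mathbb{R}^N$ so that $\int_{\mathbb{R}^N}|\nabla u|^2\,dx=\int_\Omega|\nabla u|^2\,dx$ and $u\in W^{1,2}(\mathbb{R}^N)$.

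For $N>2$ I would use that $\kappa=\frac{2N}{N-2}=2^*$ is the Sobolev conjugate of $2$ and apply the classical inequality $\big(\int_{\mathbb{R}^N}|u|^{\kappa}\,dx\big)^{1/\kappa}\le C(N)\big(\int_{\mathbb{R}^N}|\nabla u|^2\,dx\big)^{1/2}$. Since $u$ vanishes outside $\Omega$, both sides reduce to integrals over $\Omega$, and a direct computation gives $\frac1N-\frac12+\frac1\kappa=0$, so that $|\Omega|^{\frac1N-\frac12+\frac1\kappa}=1$ and \eqref{e.friedrich} follows (in this range the hypothesis $|\Omega|<\infty$ is not even needed).

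For $N=2$ the endpoint exponent is $2^*=\infty$, so that route fails and the finiteness of $|\Omega|$ must enter. Here I would apply the subcritical Sobolev inequality $\|v\|_{L^2(\mathbb{R}^2)}\le C\|\nabla v\|_{L^1(\mathbb{R}^2)}$ (the $W^{1,1}$ case, whose conjugate exponent is $1^*=2$) to $v=u^2\in W_0^{1,1}(\mathbb{R}^2)$, for which $\nabla v=2u\nabla u\in L^1$ by Cauchy--Schwarz. This gives
$$
\Big(\int_\Omega |u|^4\,dx\Big)^{1/2}=\|u^2\|_{L^2(\mathbb{R}^2)}\le 2C\int_{\mathbb{R}^2}|u||\nabla u|\,dx\le 2C\Big(\int_\Omega|u|^2\,dx\Big)^{1/2}\Big(\int_\Omega|\nabla u|^2\,dx\Big)^{1/2}.
$$
To finish I would absorb $\|u\|_{L^2(\Omega)}$ via the Poincar\'e inequality $\|u\|_{L^2(\Omega)}\le C(N)|\Omega|^{1/2}\|\nabla u\|_{L^2(\Omega)}$ for zero--boundary--value functions; this is not circular, since it follows from $\|u\|_{L^2(\Omega)}=\|u\|_{L^{1^*}(\Omega)}\le C\|\nabla u\|_{L^1(\Omega)}$ together with H\"older's inequality $\|\nabla u\|_{L^1(\Omega)}\le|\Omega|^{1/2}\|\nabla u\|_{L^2(\Omega)}$. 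Substituting yields $\big(\int_\Omega|u|^4\big)^{1/4}\le C|\Omega|^{1/4}\big(\int_\Omega|\nabla u|^2\big)^{1/2}$, which is \eqref{e.friedrich} since $\frac1N-\frac12+\frac1\kappa=\frac14$ when $N=2$.

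I do not anticipate any genuine obstacle; the only delicate point is the borderline dimension $N=2$, where one cannot use the endpoint exponent and must instead apply a subcritical Sobolev inequality to $u^2$ and carefully track the power of $|\Omega|$ so that it matches $\frac1N-\frac12+\frac1\kappa$. One could also give a single unified proof by applying the $W^{1,1}$-Sobolev inequality to $|u|^\beta$ with $\beta=\frac{2(N-1)}{N-2}$ when $N>2$ (the resulting term in $u$ is then self-absorbing) and $\beta=2$ when $N=2$ (which forces the extra Poincar\'e step), but splitting the two cases is cleaner. In any event this is exactly \cite[Corollary 1.57]{Maly}.
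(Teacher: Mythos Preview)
Your proof is correct and complete. The paper does not actually prove this lemma; it simply states the result and refers to \cite[Corollary 1.57]{Maly}, which is the same reference you cite at the end, so there is nothing further to compare.
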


Our final auxiliary result is a lemma of Bombieri-Giusti \cite{BomGi}, which can be proved with the same arguments as in the proof of \cite[Lemma 2.11]{Kin-Kuusi} and \cite[Lemma 2.2.6]{Scoste}.

\begin{Lemma}\label{Bombieri}
Assume that $\nu$ is a Borel measure on $\R^{N+1}$ and let $\theta$, $A$ and $\gamma$ be positive constants, $0<\delta<1$ and $0<\alpha\leq\infty$. Let $U(\sigma)$ be bounded measurable sets with $U(\sigma')\subset U(\sigma)$ for $0<\delta\leq\sigma'<\sigma\leq 1$. Let $f$ be a positive measurable function on $U(1)$ which satisfies the reverse H\"older inequality
$$
\left(\fint_{U(\sigma')}f^\alpha\,d\nu\right)^\frac{1}{\alpha}
\leq\left(\frac{A}{(\sigma-\sigma')^{\theta}}\fint_{U(\sigma)}f^\beta\,d\nu\right)^\frac{1}{\beta},
$$
with $0<\beta<\min\{1,\alpha\}$. Further assume that $f$ satisfies
$$
\left|\{x\in U(1):\log f>\lambda\}\right|\leq\frac{A|U(\delta)|}{\lambda^\gamma}
$$
for all $\lambda>0$. Then 
$$
\left(\fint_{U(\delta)}f^\alpha\,d\nu\right)^\frac{1}{\alpha}\leq C,
$$
for some constant $C=C(\theta,\delta,\alpha,\gamma,A)>0$.
\end{Lemma}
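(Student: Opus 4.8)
The plan is to run the abstract Moser iteration of Bombieri--Giusti, along the lines of \cite[Lemma 2.11]{Kin-Kuusi} and \cite[Lemma 2.2.6]{Scoste}. Two reductions come first. By a routine truncation argument it suffices to establish the bound for bounded $f$ with a constant independent of $\esssup f$, and since in the intended applications $\nu$ is comparable to the Lebesgue measure on the nested sets $U(\sigma)$, $\delta\le\sigma\le1$, one passes freely between $\nu$ and $|\cdot|$ on these sets (this is the only place where the measures $\nu$ and $|\cdot|$ occurring in the two hypotheses must be matched). The proof then splits into two essentially independent blocks, which are combined at the end.

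Block A is the self-improvement of the reverse H\"older inequality. Writing $\phi(p,\sigma)=\big(\fint_{U(\sigma)}f^p\,d\nu\big)^{1/p}$ for $0<p\le\alpha$, with $\phi(\infty,\sigma)=\esssup_{U(\sigma)}f$, I would, for $0<p<\beta$, interpolate $L^\beta$ between $L^p$ and $L^\alpha$ on $U(\sigma)$ to get $\phi(\beta,\sigma)\le\phi(p,\sigma)^{1-\mu}\phi(\alpha,\sigma)^{\mu}$ with $\mu=\mu(p,\beta,\alpha)\in(0,1)$, insert this into the reverse H\"older hypothesis, and absorb the resulting power of $\phi(\alpha,\sigma)$ by Young's inequality, obtaining
$$
\phi(\alpha,\sigma')\le\tfrac12\,\phi(\alpha,\sigma)+\frac{C}{(\sigma-\sigma')^{\theta'}}\,\phi(p,\sigma),\qquad \delta\le\sigma'<\sigma\le\tfrac{1+\delta}{2},
$$
for suitable $\theta',C$ depending only on $p,\beta,\alpha,\theta,A$. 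A standard iteration lemma over a slowly shrinking sequence of radii (choosing the common ratio of the radii so close to $1$ that it beats the factor $\tfrac12$) then yields
$$
\Big(\fint_{U(\delta)}f^\alpha\,d\nu\Big)^{1/\alpha}\le C\,\Big(\fint_{U((1+\delta)/2)}f^{p}\,d\nu\Big)^{1/p}.
$$

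Block B controls the low power through the logarithmic estimate. Here I would bound $\fint_{U((1+\delta)/2)}f^{p}\,d\nu$ by the layer--cake formula: splitting the integral at level $e$ and using $\nu(\{f>t\}\cap U(\tfrac{1+\delta}{2}))\le\nu(\{f>t\}\cap U(1))\le A|U(\delta)|/(\log t)^{\gamma}$ for $t>e$ together with $\nu(U(\tfrac{1+\delta}{2}))\gtrsim|U(\delta)|$, one estimates $\fint_{U((1+\delta)/2)}f^{p}\,d\nu$ from above by a quantity which, upon choosing the exponent $p>0$ sufficiently small in terms of $\gamma$, $\delta$, the constants $\theta,A$ (and, for bounded $f$, of its bound), is dominated by a constant depending only on $\theta,\delta,\alpha,\gamma,A$. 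Substituting this into the inequality from Block A and removing the truncation yields the assertion.

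The step I expect to be the main obstacle is precisely the quantitative coordination between the three parameters entering Blocks A and B---the small exponent $p$, the truncation level, and the radii of the iteration---which must be tuned so that the sequences of constants generated in the two blocks assemble into a single bound independent of the truncation; this is the delicate, and classical, heart of the Bombieri--Giusti scheme (cf. \cite{Kin-Kuusi,Scoste}). Everything here is internal to this lemma: the mixed local and nonlocal and parabolic features of \eqref{Problem} have already been absorbed into the hypotheses---the reverse H\"older inequality and the logarithmic estimate---that the earlier lemmas supply.
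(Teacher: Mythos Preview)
The paper does not prove this lemma; it only states it and refers to \cite[Lemma 2.11]{Kin-Kuusi} and \cite[Lemma 2.2.6]{Scoste}. At that level your proposal is aligned with the paper, since you invoke the same references. However, the specific two-block structure you describe contains a genuine gap that the referenced arguments avoid.

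The problem is Block~B. For any fixed $p>0$, the layer--cake bound
\[
\fint_{U((1+\delta)/2)} f^{p}\,d\nu
\;\le\; e^{p}
+ p\int_{e}^{\infty} t^{p-1}\,\frac{A}{(\log t)^{\gamma}}\,dt
\;=\;
e^{p}+ pA\int_{1}^{\infty}\frac{e^{p\lambda}}{\lambda^{\gamma}}\,d\lambda
\]
diverges, so the weak estimate on the distribution of $\log f$ alone never controls $\fint f^{p}$. Truncating $f\le K$ only shifts the issue: the resulting bound depends on $K$, and since in Block~A the constant $C(p)$ blows up as $p\to 0$ (the interpolation parameter $\mu\to 1$ there), letting $p$ depend on $K$ forces the Block~A constant to depend on $K$ as well. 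The two dependences do not cancel, so the sketch as written cannot produce a bound independent of the truncation, which you yourself flag as the main obstacle but do not resolve.

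The argument in \cite{Kin-Kuusi,Scoste} does \emph{not} decouple the two blocks. One keeps $\beta$ fixed, writes $\psi(\sigma)=\bigl(\fint_{U(\sigma)}f^{\alpha}\bigr)^{1/\alpha}$, and at each step of the radius iteration splits
\[
\fint_{U(\sigma)} f^{\beta}
\le \Lambda^{\beta}
+ \Bigl(\frac{\nu(\{f>\Lambda\})}{\nu(U(\sigma))}\Bigr)^{1-\beta/\alpha}\psi(\sigma)^{\beta}
\le \Lambda^{\beta}
+ \frac{C\,\psi(\sigma)^{\beta}}{(\log\Lambda)^{\gamma(1-\beta/\alpha)}},
\]
with the level chosen dynamically, e.g.\ $\Lambda=\psi(\sigma)^{1/2}$. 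Substituting into the reverse H\"older hypothesis yields a recursion of the form
\[
\psi(\sigma')\le \frac{C}{(\sigma-\sigma')^{\theta/\beta}}\cdot\frac{\psi(\sigma)}{(\log\psi(\sigma))^{c}}
\quad\text{when }\psi(\sigma)\text{ is large},
\]
and iterating over a sequence $\sigma_j\searrow\delta$ with $\sigma_j-\sigma_{j+1}\sim 2^{-j}(1-\delta)$ forces $\log\psi(\sigma_j)$ down to a bound depending only on $\theta,\delta,\alpha,\gamma,A$. The essential point is that the distribution estimate is used to compare $\fint f^{\beta}$ with $\psi(\sigma)$ itself, not to bound it absolutely; this is what makes the iteration close.
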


\section{Energy estimates}
In this section, we establish energy estimates for weak supersolutions of \eqref{Problem}. The first one is the following lemma that helps us to estimate the supremum of the inverse of weak supersolutions.
\begin{Lemma}\label{inveng}
Assume that $u$ is a weak supersolution of \eqref{Problem} such that
$u\geq 0$ in $B_R(x_0)\times(\tau_1-\tau,\tau_2)\subset\Om\times(0,T)$.
Let $0<r\leq 1$ be such that $r<R$ and denote $v=u+l$, $l>0$. Then for any $m>0$ there exists a positive constant $C(m)\approx 1+m$ such that
\begin{equation}\label{negeng1}
\begin{split}
&\int_{\tau_1-\tau}^{\tau_2}\int_{B_r(x_0)}|\nabla v^{-\frac{m}{2}}|^2\psi(x)^{m+2}\eta(t)\,dx\,dt\\
&\leq\frac{m^2C(m+1)}{m+1}\int_{\tau_1-\tau}^{\tau_2}\int_{B_r(x_0)}\big(\psi(x)-\psi(y)\big)^2\left[\left(\frac{v(x,t)}{\psi(x)}\right)^{-m}+\left(\frac{v(y,t)}{\psi(y)}\right)^{-m}\right]\eta(t)\,d\mu\,dt\\
&+\frac{m^2}{m+1}\Bigg[2\Lambda\esssup_{x\in\mathrm{supp}\psi}\int_{\mathbb{R}^N\setminus B_r(x_0)}\frac{dy}{|x-y|^{N+2s}}\int_{\tau_1-\tau}^{\tau_2}\int_{B_r(x_0)}v(x,t)^{-m}\psi(x)^{m+2}\eta(t)\,dx\,dt\\
&+\frac{2\Lambda}{l}\esssup_{\tau_1-\tau<t<\tau_2,\,x\in\mathrm{supp}\psi}\int_{\mathbb{R}^N\setminus B_R(x_0)}\frac{u_-(y,t)}{|x-y|^{N+2s}}\,dy\int_{\tau_1-\tau}^{\tau_2}\int_{B_r(x_0)}v(x,t)^{-m}\psi(x,t)^{m+2}\eta(t)\,dx\,dt\Bigg]\\
&+\frac{m^2(m+2)^2}{(m+1)^2}\int_{\tau_1-\tau}^{\tau_2}\int_{B_r(x_0)}|\nabla\psi|^2 v(x,t)^{-m}\psi(x)^{m}\eta(t)\,dx\,dt\\
&+\frac{m}{m+1}\int_{\tau_1-\tau}^{\tau_2}\int_{B_r(x_0)}v(x,t)^{-m}\psi(x)^{m+2}|\partial_t\eta(t)|\,dx\,dt
\end{split}
\end{equation}
and
\begin{equation}\label{negeng2}
\begin{split}
&\esssup_{\tau_1<t<\tau_2}\int_{B_r(x_0)}v(x,t)^{-m}\psi(x)^{m+2}\,dx\\
&\leq mC(m+1)\int_{\tau_1-\tau}^{\tau_2}\int_{B_r(x_0)}\big(\psi(x)-\psi(y)\big)^2\left[\left(\frac{v(x,t)}{\psi(x)}\right)^{-m}+\left(\frac{v(y,t)}{\psi(y)}\right)^{-m}\right]\eta(t)\,d\mu\,dt\\
&+m\Bigg[2\Lambda\esssup_{x\in\mathrm{supp}\psi}\int_{\mathbb{R}^N\setminus B_r(x_0)}\frac{dy}{|x-y|^{N+2s}}\int_{\tau_1-\tau}^{\tau_2}\int_{B_r(x_0)}v(x,t)^{-m}\psi(x)^{m+2}\eta(t)\,dx\,dt\\
&+\frac{2\Lambda}{l}\esssup_{\tau_1-\tau<t<\tau_2,\,x\in\mathrm{supp}\psi}\int_{\mathbb{R}^N\setminus B_R(x_0)}\frac{u_-(y,t)}{|x-y|^{N+2s}}\,dy\int_{\tau_1-\tau}^{\tau_2}\int_{B_r(x_0)}v(x,t)^{-m}\psi(x,t)^{m+2}\eta(t)\,dx\,dt\Bigg]\\
&+\frac{m(m+2)^2}{(m+1)}\int_{\tau_1-\tau}^{\tau_2}\int_{B_r(x_0)}|\nabla\psi|^2 v(x,t)^{-m}\psi(x)^{m}\eta(t)\,dx\,dt\\
&+\int_{\tau_1-\tau}^{\tau_2}\int_{B_r(x_0)}v(x,t)^{-m}\psi(x)^{m+2}|\partial_t\eta(t)|\,dx\,dt
\end{split}
\end{equation}
hold for every nonnegative $\psi\in C_c^{\infty}(B_r(x_0))$ and $\eta\in C^\infty(\mathbb{R})$  such that $\eta(t)\equiv 0$ if $t\leq\tau_1-\tau$ and $\eta(t)\equiv 1$ if $t\geq\tau_2$. 
\end{Lemma}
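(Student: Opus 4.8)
The plan is to test the weak formulation \eqref{wksol} on the cylinder $B_r(x_0)\times(\tau_1-\tau,\tau_2)$ with the function
\[
\phi=v^{-(m+1)}\psi^{m+2}\eta,\qquad v=u+l.
\]
Since $\mathrm{supp}\,\psi\Subset B_r(x_0)\subset B_R(x_0)$ and $u\ge0$ on $B_R(x_0)\times(\tau_1-\tau,\tau_2)$, we have $v\ge l>0$ on the support of $\phi$, so $\phi$ is a nonnegative admissible test function in the sense of Definition \ref{wksoldef}; its dependence on $u$ is handled through the exponential mollification of Remark \ref{Molifier}, i.e.\ one first tests with the function built from $u_h$, derives the estimate, and lets $h\to0$. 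The three groups of terms in \eqref{wksol} — the parabolic terms in the first line, the local term $\nabla u\cdot\nabla\phi$, and the nonlocal double integral — are then estimated separately.

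For the parabolic part, the identity $\partial_t v\cdot v^{-(m+1)}=-\tfrac1m\,\partial_t(v^{-m})$ together with integration by parts in $t$ (recall $\eta(\tau_1-\tau)=0$) turns the first line of \eqref{wksol} into $-\tfrac1m\int_{B_r(x_0)}v^{-m}\psi^{m+2}\eta\,dx$ evaluated at the upper time level, plus $\tfrac1m\iint v^{-m}\psi^{m+2}\,\partial_t\eta$; keeping the upper time level variable over $(\tau_1,\tau_2)$, where $\eta\equiv1$, and taking the essential supremum furnishes the left-hand side of \eqref{negeng2} and the $|\partial_t\eta|$ terms of both estimates. For the local part,
\[
\nabla u\cdot\nabla\phi=-(m+1)\,v^{-m-2}|\nabla v|^2\psi^{m+2}\eta+(m+2)\,v^{-(m+1)}\psi^{m+1}\eta\,\nabla v\cdot\nabla\psi ,
\]
and since $v^{-m-2}|\nabla v|^2=\tfrac{4}{m^2}|\nabla v^{-m/2}|^2$ the first summand produces the left-hand side of \eqref{negeng1}, while the cross term is absorbed into that gradient term by Young's inequality at the expense of the $|\nabla\psi|^2\,v^{-m}\psi^{m}$ term.

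The nonlocal double integral is the heart of the argument. Splitting $\mathbb{R}^N\times\mathbb{R}^N$ according to $B_r(x_0)$ and using the symmetry of $d\mu$ together with $\phi\equiv0$ outside $\mathrm{supp}\,\psi$, it reduces to the diagonal part over $B_r(x_0)\times B_r(x_0)$ plus twice the long-range part $\iint_{B_r(x_0)\times(\mathbb{R}^N\setminus B_r(x_0))}\bigl(u(x,t)-u(y,t)\bigr)\phi(x,t)\,d\mu\,dt$. On the diagonal part $u(x,t)-u(y,t)=v(x,t)-v(y,t)$, and I would apply Lemma \ref{auxineq}(i) with $\epsilon=m+1$, $b=v(x,t)$, $a=v(y,t)$, $\tau_2=\psi(x)$, $\tau_1=\psi(y)$; note that here $\epsilon+1=m+2$, which is exactly why $\psi^{m+2}$ occurs in $\phi$. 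The nonnegative square term produced by \eqref{ine1} is simply discarded, and the remainder is precisely the term $\bigl(\psi(x)-\psi(y)\bigr)^2\bigl[(v(x,t)/\psi(x))^{-m}+(v(y,t)/\psi(y))^{-m}\bigr]$ with constant $C(m+1)=\max\{4,\tfrac{6m+1}{2}\}\approx1+m$. For the long-range part I would use the elementary bound $v(x,t)-v(y,t)\le v(x,t)+u_-(y,t)$, valid for every $y$, split the $y$-integration into $B_R(x_0)\setminus B_r(x_0)$ and $\mathbb{R}^N\setminus B_R(x_0)$, invoke $u_-\equiv0$ on $B_R(x_0)$, use $v(x,t)^{-(m+1)}\le l^{-1}v(x,t)^{-m}$, and bound $K$ from above by \eqref{kernel}; this produces exactly the two essential-supremum terms in \eqref{negeng1} and \eqref{negeng2}.

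Finally, collecting the three contributions, absorbing the cross term, and dividing by the appropriate power of $m+1$ yields \eqref{negeng1}; running the same combination but starting instead from the parabolic term at a variable upper time level and taking the supremum yields \eqref{negeng2}. I expect the main obstacle to be the nonlocal term: choosing the substitution $\epsilon=m+1$ in Lemma \ref{auxineq}(i) so that the diagonal part reproduces exactly the claimed structure, and performing the two-step domain splitting (at $\partial B_r(x_0)$ and at $\partial B_R(x_0)$) together with the $u_-$ bound for the long-range part while keeping precise control of the constants. A secondary technical point is the justification of the test function through the time mollification and the limit $h\to0$, which is routine but must be carried out carefully since $\phi$ is only continuous, not weakly differentiable, in time.
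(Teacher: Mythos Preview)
Your proposal is correct and follows essentially the same route as the paper's proof: the same test function $\phi=v^{-\epsilon}\psi^{\epsilon+1}\eta$ with $\epsilon=m+1$, the same decomposition into parabolic, local gradient, diagonal nonlocal, and long-range nonlocal pieces, the same application of Lemma~\ref{auxineq}(i) on the diagonal part, and the same $v(x,t)-v(y,t)\le v(x,t)+u_-(y,t)$ together with $v^{-\epsilon}\le l^{-1}v^{1-\epsilon}$ on the long-range part. The only cosmetic difference is that the paper carries the nonnegative square term from \eqref{ine1} along in the combined estimate before dropping it, whereas you discard it immediately; both lead to the same inequalities.
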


\begin{proof}
Let $\epsilon>1$ and $\psi\in C_c^{\infty}(B_r(x_0))$. Let $t_1=\tau_1-\tau,\,t_2\in(\tau_1,\tau_2)$ and $\eta\in C^\infty(t_1,t_2)$ be such that $\eta(t_1)=0$ and $\eta(t)=1$ for all $t\geq t_2$. Note that $v=u+l$ is again a weak supersolution of \eqref{Problem} and we observe that 
$$
\phi(x,t)=v(x,t)^{-\epsilon}\psi(x)^{\epsilon+1}\eta(t)
$$
is an admissible test function in \eqref{wksol}. Indeed, following \cite{KLin, Verenacontinuity}, since $v$ is a weak supersolution of \eqref{Problem}, noting the definition of $(\cdot)_h$ from \eqref{mol}, we have
\begin{equation}\label{smth1}
\lim_{h\to 0}(I_{h}+L_h+A_h)\geq 0,
\end{equation}
where
$$
I_h=\int_{t_1}^{t_2}\int_{B_r(x_0)}\partial_t v_{h}\phi\,dx\,dt,
$$
$$
L_h=\int_{t_1}^{t_2}\int_{B_r(x_0)}\big(\nabla v(x,t)\big)_h\nabla\phi\,dx\,dt
$$
and
$$
A_h=\int_{t_1}^{t_2}\int_{\mathbb{R}^N}\int_{\mathbb{R}^N}\big((v(x,t)-v(y,t)\big)K(x,y)\big)_h\big(\phi(x,t)-\phi(y,t)\big)\,dx\,dy\,dt.
$$
\textbf{Estimate of $I_h$:} We observe that
\begin{align*}
I_h&=\int_{t_1}^{t_2}\int_{B_r(x_0)}\partial_t v_{h} v(x,t)^{-\epsilon}\psi(x)^{\epsilon+1}\eta(t)\,dx\,dt\\
&=\int_{t_1}^{t_2}\int_{B_r(x_0)}\partial_t v_{h}\left(v(x,t)^{-\epsilon}-v_h(x,t)^{-\epsilon}\right)\psi(x)^{\epsilon+1}\eta(t)\,dx\,dt\\
&\quad+\int_{t_1}^{t_2}\int_{B_r(x_0)}\partial_t v_{h}v_h(x,t)^{-\epsilon}\psi(x)^{\epsilon+1}\eta(t)\,dx\,dt.
\end{align*}
Since  
$$
\partial_t v_h=\frac{v-v_h}{h},
$$
we have
$$
\partial_t v_h\big(v^{-\epsilon}-v_h^{-\epsilon}\big)\leq 0,
$$
over the set $B_r(x_0)\times(t_1,t_2)$. Therefore, the first integral in the above estimate of $I_h$ is nonpositive. As a consequence, we obtain
\begin{equation}\label{llim2}
\begin{split}
I_h&\leq\int_{t_1}^{t_2}\int_{B_r(x_0)}\partial_t v_{h}v_h(x,t)^{-\epsilon}\psi(x)^{\epsilon+1}\eta(t)\,dx\,dt\\
&=\frac{1}{1-\epsilon}\int_{t_1}^{t_2}\int_{B_r(x_0)}\partial_t v_h^{1-\epsilon}\psi(x)^{\epsilon+1}\eta(t)\,dx\,dt.
\end{split}
\end{equation}
Now passing the limit as $h\to 0$, we have
\begin{equation}\label{estI0}
\lim_{h\to 0}I_h\leq I_0,
\end{equation}
where
\begin{equation}\label{I0new}
\begin{split}
I_0:=-\frac{1}{\epsilon-1}\int_{B_r(x_0)}\psi(x)^{\epsilon+1}v(x,t_2)^{1-\epsilon}\,dx+\frac{1}{\epsilon-1}\int_{t_1}^{t_2}\int_{B_r(x_0)}\psi(x)^{\epsilon+1}v(x,t)^{1-\epsilon}\partial_t\eta(t)\,dx\,dt.
\end{split}
\end{equation}
\textbf{Estimate of $L_h$:} Since $\nabla v\in L^{2}(B_r(x_0)\times(t_1,t_2))$, by \cite[Lemma $2.2$]{KLin}, we have
\begin{equation}\label{llim}
L:=\lim_{h\to 0}L_h=\int_{t_1}^{t_2}\int_{B_r(x_0)}\nabla v\nabla\phi\,dx\,dt.
\end{equation}
\textbf{Estimate of $A_h$:} Following the same arguments as in the proof of \cite[Lemma $3.1$]{BGK}, we obtain
\begin{equation}\label{nlim}
A:=\lim_{h\to 0}A_h=\int_{t_1}^{t_2}\int_{\mathbb{R}^N}\int_{\mathbb{R}^N}\big(v(x,t)-v(y,t)\big)\big(\phi(x,t)-\phi(y,t)\big)\,d\mu\,dt.
\end{equation}
Therefore, using \eqref{estI0}, \eqref{llim}, \eqref{nlim} in \eqref{smth1}, we have
\begin{equation}\label{supentestneg}
\begin{split}
0&\leq I_0+A+L\\
&=I_0+\int_{t_1}^{t_2}\int_{B_r(x_0)}\int_{B_r(x_0)}\big(v(x,t)-v(y,t)\big)\big(\phi(x,t)-\phi(y,t)\big)\,d\mu\,dt\\
&+2\int_{t_1}^{t_2}\int_{\mathbb{R}^N\setminus B_r(x_0)}\int_{B_r(x_0)}\big(v(x,t)-v(y,t)\big)\phi(x,t)\,d\mu\,dt\\
&+\int_{t_1}^{t_2}\int_{B_r(x_0)}\nabla v\nabla\phi\,dx\,dt\\
&:=I_0+I_1+I_2+I_3.
\end{split}
\end{equation}
\textbf{Estimate of $I_1$:} Using [\eqref{ine1}, Lemma \ref{auxineq}] for $C(\epsilon)=\max\big\{4,\frac{6\epsilon-5}{2}\big\}$, we have 
\begin{equation}\label{estI1}
\begin{split}
I_1&=\int_{t_1}^{t_2}\int_{B_r(x_0)}\int_{B_r(x_0)}\big(v(x,t)-v(y,t)\big)\Big(v(x,t)^{-\epsilon}\psi(x)^{\epsilon+1}-v(y,t)^{-\epsilon}\psi(y)^{\epsilon+1}\Big)\eta(t)\,d\mu\,dt\\
&\leq-\int_{t_1}^{t_2}\int_{B_r(x_0)}\int_{B_r(x_0)}\frac{\psi(x)\psi(y)}{\epsilon-1}\left[\left(\frac{v(x,t)}{\psi(x)}\right)^\frac{1-\epsilon}{2}-\left(\frac{v(y,t)}{\psi(y)}\right)^\frac{1-\epsilon}{2}\right]^2 \eta(t)\,d\mu\,dt\\
&+C(\epsilon)\int_{t_1}^{t_2}\int_{B_r(x_0)}\int_{B_r(x_0)}\big(\psi(x)-\psi(y)\big)^2\left[\left(\frac{v(x,t)}{\psi(x)}\right)^{1-\epsilon}+\left(\frac{v(y,t)}{\psi(y)}\right)^{1-\epsilon}\right]\eta(t)\,d\mu\,dt.
\end{split}
\end{equation}
\textbf{Estimate of $I_2$:} Since $l>0$ and $u\geq 0$ in $B_R(x_0)\times(t_1,t_2)$, we have $v\geq l$ in $B_R(x_0)\times(t_1,t_2)$. Further, for any $x\in B_R(x_0)$, $y\in\mathbb{R}^N$ and $t\in(t_1,t_2)$, we have
$$
v(x,t)-v(y,t)\leq v(x,t)+u_-(y,t)\text{ and } v(x,t)^{-\epsilon}=v(x,t)^{1-\epsilon}v(x,t)^{-1}\leq l^{-1}v(x,t)^{1-\epsilon}.
$$ 
Using these estimates and the fact that $u\geq 0$ in $B_R(x_0)\times(t_1,t_2)$, using that $u(y,t)_-$ vanishes in $B_R(x_0)\times(t_1,t_2)$, we have
\begin{equation}\label{estI2}
\begin{split}
I_2&=2\int_{t_1}^{t_2}\int_{\mathbb{R}^N\setminus B_r(x_0)}\int_{B_r(x_0)}\big(v(x,t)-v(y,t)\big)v(x,t)^{-\epsilon}\psi(x)^{\epsilon+1}\eta(t)\,d\mu\,dt\\
&\leq 2\Lambda\esssup_{x\in\mathrm{supp}\psi}\int_{\mathbb{R}^N\setminus B_r(x_0)}\frac{dy}{|x-y|^{N+2s}}\int_{t_1}^{t_2}\int_{B_r(x_0)}v(x,t)^{1-\epsilon}\psi(x)^{\epsilon+1}\eta(t)\,dx\,dt\\
&+\frac{2\Lambda}{l}\esssup_{t_1<t<t_2,\,x\in\mathrm{supp}\psi}\int_{\mathbb{R}^N\setminus B_R(x_0)}\frac{u_-(y,t)}{|x-y|^{N+2s}}\,dy\int_{t_1}^{t_2}\int_{B_r(x_0)}v(x,t)^{1-\epsilon}\psi(x)^{\epsilon+1}\eta(t)\,dx\,dt.
\end{split}
\end{equation}
\textbf{Estimate of $I_3$:} We have
\begin{equation}\label{estI3pre}
\begin{split}
I_3&=\int_{t_1}^{t_2}\int_{B_r(x_0)}\nabla v\nabla\big(v(x,t)^{-\epsilon}\psi(x)^{\epsilon+1}\big)\eta(t)\,dx\,dt\\
&=-\epsilon\int_{t_1}^{t_2}\int_{B_r(x_0)}v(x,t)^{-\epsilon-1}|\nabla v|^2\psi(x)^{\epsilon+1}\eta(t)\,dx\,dt\\
&+(\epsilon+1)\int_{t_1}^{t_2}\int_{B_r(x_0)}\psi(x)^{\epsilon}v(x,t)^{-\epsilon}\nabla v\nabla\psi\eta(t)\,dx\,dt.
\end{split}
\end{equation}
Using Young's inequality
\begin{equation}\label{Young}
ab\leq\delta a^2+\frac{1}{4\delta}b^2,
\end{equation}
for any $a,b\geq 0$ and $\delta>0$, we have
\begin{equation}\label{Youngneg}
\begin{split}
\psi^{\epsilon}v^{-\epsilon}|\nabla v||\nabla\psi|
&=\big(v^{\frac{-\epsilon-1}{2}}\psi^\frac{\epsilon+1}{2}|\nabla v|\big)\big(|\nabla\psi|v^\frac{1-\epsilon}{2}\psi^\frac{\epsilon-1}{2}\big)\\
&\leq\delta v^{-\epsilon-1}\psi^{\epsilon+1}|\nabla v|^2+\frac{1}{4\delta}|\nabla\psi|^2 v^{1-\epsilon}\psi^{\epsilon-1}.
\end{split}
\end{equation}
Choosing $\delta=\frac{\epsilon}{2(\epsilon+1)}$ and employing the estimate \eqref{Youngneg} in \eqref{estI3pre} we obtain
\begin{equation}\label{estI3}
\begin{split}
I_3&\leq-\frac{\epsilon}{2}\int_{t_1}^{t_2}\int_{B_r(x_0)}v(x,t)^{-\epsilon-1}|\nabla v|^2\psi(x)^{\epsilon+1}\eta(t)\,dx\,dt
+\frac{(\epsilon+1)^2}{2\epsilon}\int_{t_1}^{t_2}\int_{B_r(x_0)}|\nabla\psi|^2 v^{1-\epsilon}\psi^{\epsilon-1}\,dx\,dt\\
&\leq-\frac{2\epsilon}{(\epsilon-1)^2}\int_{t_1}^{t_2}\int_{B_r(x_0)}\big|\nabla v^\frac{1-\epsilon}{2}\big|^2\psi(x)^{\epsilon+1}\eta(t)\,dx\,dt
+\frac{(\epsilon+1)^2}{2\epsilon}\int_{t_1}^{t_2}\int_{B_r(x_0)}|\nabla\psi|^2 v^{1-\epsilon}\psi^{\epsilon-1}\,dx\, dt,
\end{split}
\end{equation}
where we have also used the fact that 
$$
v^{-\epsilon-1}|\nabla v|^2=\frac{4}{(\epsilon-1)^2}\big|\nabla v^\frac{1-\epsilon}{2}\big|^2.
$$ 
By combining the estimates \eqref{I0new}, \eqref{estI1}, \eqref{estI2} and \eqref{estI3} in \eqref{supentestneg}, we obtain
\begin{equation}\label{engnegpre}
\begin{split}
&\int_{t_1}^{t_2}\int_{B_r(x_0)}\big|\nabla v^\frac{1-\epsilon}{2}\big|^2\psi(x)^{\epsilon+1}\eta(t)\,dx\,dt\\
&+\frac{\epsilon-1}{2\epsilon}\int_{t_1}^{t_2}\int_{B_r(x_0)}\int_{B_r(x_0)}\psi(x)\psi(y)\left[\left(\frac{v(x,t)}{\psi(x)}\right)^\frac{1-\epsilon}{2}-\left(\frac{v(y,t)}{\psi(y)}\right)^\frac{1-\epsilon}{2}\right]^2 \eta(t)\,d\mu\, dt\\
&+\frac{\epsilon-1}{2\epsilon}\int_{B_r(x_0)}\psi(x)^{\epsilon+1}v(x,t_2)^{1-\epsilon}\,dx\\
&\leq\frac{C(\epsilon)(\epsilon-1)^2}{2\epsilon}\int_{t_1}^{t_2}\int_{B_r(x_0)}\int_{B_r(x_0)}\big(\psi(x)-\psi(y)\big)^2\left[\left(\frac{v(x,t)}{\psi(x)}\right)^{1-\epsilon}+\left(\frac{v(y,t)}{\psi(y)}\right)^{1-\epsilon}\right]\eta(t)\,d\mu\,dt\\
&+\frac{(\epsilon-1)^2}{2\epsilon}\Bigg[2\Lambda\esssup_{x\in\mathrm{supp}\psi}\int_{\mathbb{R}^N\setminus B_r(x_0)}\frac{dy}{|x-y|^{N+2s}}\int_{t_1}^{t_2}\int_{B_r(x_0)}v(x,t)^{1-\epsilon}\psi(x,t)^{\epsilon+1}\eta(t)\,dx\,dt\\
&+\frac{2\Lambda}{l}\esssup_{t_1<t<t_2,\,x\in\mathrm{supp}\psi}\int_{\mathbb{R}^N\setminus B_R(x_0)}\frac{u_-(y,t)}{|x-y|^{N+2s}}\,dy\int_{t_1}^{t_2}\int_{B_r(x_0)}v(x,t)^{1-\epsilon}\psi(x,t)^{\epsilon+1}\eta(t)\,dx\,dt\Bigg]\\
&+\frac{\big(\epsilon^2-1\big)^2}{4\epsilon^2}\int_{t_1}^{t_2}\int_{B_r(x_0)}|\nabla\psi|^2 v(x,t)^{1-\epsilon}\psi(x)^{\epsilon-1}\,dx\,dt\\
&+\frac{\epsilon-1}{2\epsilon}\int_{t_1}^{t_2}\int_{B_r(x_0)}\psi(x)^{\epsilon+1}v(x,t)^{1-\epsilon}|\partial_t\eta(t)|\,dx\,dt.
\end{split}
\end{equation}
Letting $t_2\to\tau_2$ in \eqref{engnegpre}, we have
\begin{equation}\label{engnegsemi}
\begin{split}
&\int_{t_1}^{\tau_2}\int_{B_r(x_0)}|\nabla v^\frac{1-\epsilon}{2}|^2\psi(x)^{\epsilon+1}\eta(t)\,dx\, dt\\
&\leq\frac{C(\epsilon)(\epsilon-1)^2}{2\epsilon}\int_{t_1}^{t_2}\int_{B_r(x_0)}\int_{B_r(x_0)}\big(\psi(x)-\psi(y)\big)^2\left[\left(\frac{v(x,t)}{\psi(x)}\right)^{1-\epsilon}+\left(\frac{v(y,t)}{\psi(y)}\right)^{1-\epsilon}\right]\eta(t)\,d\mu\,dt\\
&+\frac{(\epsilon-1)^2}{2\epsilon}\Bigg[2\Lambda\esssup_{x\in\mathrm{supp}\psi}\int_{\mathbb{R}^N\setminus B_r(x_0)}\frac{dy}{|x-y|^{N+2s}}\int_{t_1}^{t_2}\int_{B_r(x_0)}v(x,t)^{1-\epsilon}\psi(x,t)^{\epsilon+1}\eta(t)\,dx\,dt\\
&+\frac{2\Lambda}{l}\esssup_{t_1<t<t_2,\,x\in\mathrm{supp}\psi}\int_{\mathbb{R}^N\setminus B_R(x_0)}\frac{u_-(y,t)}{|x-y|^{N+2s}}\,dy\int_{t_1}^{t_2}\int_{B_r(x_0)}v(x,t)^{1-\epsilon}\psi(x,t)^{\epsilon+1}\eta(t)\,dx\,dt\Bigg]\\
&+\frac{\big(\epsilon^2-1\big)^2}{4\epsilon^2}\int_{t_1}^{t_2}\int_{B_r(x_0)}|\nabla\psi|^2 v(x,t)^{1-\epsilon}\psi(x)^{\epsilon-1}\,dx\,dt\\
&+\frac{\epsilon-1}{2\epsilon}\int_{t_1}^{t_2}\int_{B_r(x_0)}\psi(x)^{\epsilon+1}v(x,t)^{1-\epsilon}|\partial_t\eta(t)|\,dx\,dt.
\end{split}
\end{equation}
Now choosing $t_2\in(t_1,\tau_2)$ such that
\begin{equation*}\label{negtime}
\int_{B_r(x_0)}\psi(x)^{\epsilon+1}v(x,t_2)^{1-\epsilon}\,dx\geq\esssup_{t_1<t<\tau_2}\int_{B_r(x_0)}\psi(x)^{\epsilon+1}v(x,t)^{1-\epsilon}\,dx,
\end{equation*}
we obtain from \eqref{engnegpre}
\begin{equation}\label{estnegtime}
\begin{split}
&\esssup_{t_1<t<\tau_2}\int_{B_r(x_0)}\psi(x)^{\epsilon+1}v(x,t)^{1-\epsilon}\,dx\\
&\leq C(\epsilon)(\epsilon-1)\int_{t_1}^{t_2}\int_{B_r(x_0)}\int_{B_r(x_0)}\big(\psi(x)-\psi(y)\big)^2\left[\left(\frac{v(x,t)}{\psi(x)}\right)^{1-\epsilon}+\left(\frac{v(y,t)}{\psi(y)}\right)^{1-\epsilon}\right]\eta(t)\,d\mu\,dt\\
&+(\epsilon-1)\Bigg[2\Lambda\esssup_{x\in\mathrm{supp}\psi}\int_{\mathbb{R}^N\setminus B_r(x_0)}\frac{dy}{|x-y|^{N+2s}}\int_{t_1}^{t_2}\int_{B_r(x_0)}v(x,t)^{1-\epsilon}\psi(x,t)^{\epsilon+1}\eta(t)\,dx\,dt\\
&+\frac{2\Lambda}{l}\esssup_{t_1<t<t_2,\,x\in\mathrm{supp}\psi}\int_{\mathbb{R}^N\setminus B_R(x_0)}\frac{u_-(y,t)}{|x-y|^{N+2s}}\,dy\int_{t_1}^{t_2}\int_{B_r(x_0)}v(x,t)^{1-\epsilon}\psi(x,t)^{\epsilon+1}\eta(t)\,dx\,dt\Bigg]\\
&+\frac{(\epsilon-1)(\epsilon+1)^2}{2\epsilon}\int_{t_1}^{t_2}\int_{B_r(x_0)}|\nabla\psi|^2 v(x,t)^{1-\epsilon}\psi(x)^{\epsilon-1}\,dx\,dt\\
&+\int_{t_1}^{t_2}\int_{B_r(x_0)}\psi(x)^{\epsilon+1}v(x,t)^{1-\epsilon}|\partial_t\eta(t)|\,dx\,dt.
\end{split}
\end{equation}
Since $\epsilon>1$ is arbitrary, choosing $m=\epsilon-1$, from \eqref{engnegsemi} and \eqref{estnegtime} the estimates \eqref{negeng1} and \eqref{negeng2} follows respectively.
\end{proof} 

The following energy estimate is useful to obtain reverse H\"older inequality for weak supersolutions.
\begin{Lemma}\label{supenergy1}
Assume that $u$ is a weak supersolution of \eqref{Problem} such that
$u\geq 0$ in $B_R(x_0)\times(\tau_1,\tau_2+\tau)\subset\Om\times(0,T)$.
Let $0<r\leq 1$ be such that $r<R$ and denote $v=u+l$, $l>0$. 
Then for any $0<\alpha<1$, we have
\begin{equation}\label{reveng1}
\begin{split}
&\int_{\tau_1}^{\tau_2+\tau}\int_{B_r(x_0)}\big|\nabla v^\frac{\alpha}{2}\big|^2\psi^2\eta\,dx\,dt\\
&\leq\frac{\alpha^2}{1-\alpha}\Bigg[\zeta_2(1-\alpha)\int_{\tau_1}^{\tau_2+\tau}\int_{B_r(x_0)}\int_{B_r(x_0)}\big(\psi(x)-\psi(y)\big)^2 \big(v(x,t)^\alpha+v(y,t)^\alpha\big)\eta(t)\,d\mu\,dt\\
&+2\Lambda\esssup_{x\in\mathrm{supp}\psi}\int_{\mathbb{R}^N\setminus B_r(x_0)}\frac{dy}{|x-y|^{N+2s}}\int_{\tau_1}^{\tau_2+\tau}\int_{B_r(x_0)}v(x,t)^{\alpha}\psi(x,t)^{2}\eta(t)\,dx\,dt\\
&+\frac{2\Lambda}{l}\esssup_{t_1<t<t_2,\,x\in\mathrm{supp}\psi}\int_{\mathbb{R}^N\setminus B_R(x_0)}\frac{u_-(y,t)}{|x-y|^{N+2s}}\,dy\int_{\tau_1}^{\tau_2+\tau}\int_{B_r(x_0)}v(x,t)^\alpha\psi(x,t)^{2}\eta(t)\,dx\,dt\\
&+\frac{2}{1-\alpha}\int_{\tau_1}^{\tau_2+\tau}\int_{B_r(x_0)}v^{\alpha}|\nabla\psi|^2\eta\,d x dt+\frac{1}{\alpha}\int_{\tau_1}^{\tau_2+\tau}\int_{B_r(x_0)}v(x,t)^{\alpha}\psi(x)^2|\partial_{t}\eta(t)|\,dx\, dt\Bigg]
\end{split}
\end{equation}
and
\begin{equation}\label{reveng2}
\begin{split}
&\esssup_{\tau_1<t<\tau_2}\int_{B_r(x_0)}v(x,t)^{\alpha}\psi(x)^2\,dx\\
&\leq 2\alpha\Bigg[\zeta_2(1-\alpha)\int_{\tau_1}^{\tau_2+\tau}\int_{B_r(x_0)}\int_{B_r(x_0)}\big(\psi(x)-\psi(y)\big)^2\big(v(x,t)^\alpha+v(y,t)^\alpha\big)\eta(t)\,d\mu\,dt\\
&+2\Lambda\esssup_{x\in\mathrm{supp}\psi}\int_{\mathbb{R}^N\setminus B_r(x_0)}\frac{dy}{|x-y|^{N+2s}}\int_{\tau_1}^{\tau_2+\tau}\int_{B_r(x_0)}v(x,t)^{\alpha}\psi(x,t)^{2}\eta(t)\,dx\,dt\\
&+\frac{2\Lambda}{l}\esssup_{t_1<t<t_2,\,x\in\mathrm{supp}\psi}\int_{\mathbb{R}^N\setminus B_R(x_0)}\frac{u_-(y,t)}{|x-y|^{N+2s}}\,dy\int_{\tau_1}^{\tau_2+\tau}\int_{B_r(x_0)}v(x,t)^2\psi(x,t)^{\alpha}\eta(t)\,dx\,dt\\
&+\frac{2}{1-\alpha}\int_{\tau_1}^{\tau_2+\tau}\int_{B_r(x_0)}v^{\alpha}|\nabla\psi|^2\eta\,d x dt+\frac{1}{\alpha}\int_{\tau_1}^{\tau_2+\tau}\int_{B_r(x_0)}v(x,t)^{\alpha}\psi(x)^2|\partial_{t}\eta(t)|\,dx\, dt\Bigg],
\end{split}
\end{equation}
where $\zeta_2(\alpha)=\zeta(\alpha)+\frac{9}{\alpha}$ for $\zeta(\alpha)=\frac{4\alpha}{1-\alpha}$ and $\psi\in C_{c}^{\infty}(B_r(x_0))$ is nonnegative and $\eta\in C^\infty(\mathbb{R})$ is also nonnegative such that $\eta(t)= 1$ if $\tau_1\leq t\leq\tau_2$ and $\eta(t)= 0$ if $t\geq\tau_2+\tau$.
\end{Lemma}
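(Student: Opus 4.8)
I would prove Lemma~\ref{supenergy1} by following the proof of Lemma~\ref{inveng} almost verbatim, replacing the algebraic inequality \eqref{ine1} by \eqref{ine2}. Recalling (Remark~\ref{defrmk}) that $v=u+l$ is again a weak supersolution, I would fix $\epsilon=1-\alpha\in(0,1)$ and use the test function
\[
\phi(x,t)=v(x,t)^{\alpha-1}\psi(x)^2\eta(t)=v(x,t)^{-\epsilon}\psi(x)^2\eta(t),
\]
which is nonnegative, bounded because $v\ge l>0$, and whose admissibility in \eqref{wksol} is justified by the time mollification \eqref{mol} exactly as in Lemma~\ref{inveng}, yielding $\lim_{h\to0}(I_h+L_h+A_h)\ge0$. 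For the time term: since $\alpha-1<0$ the map $\tau\mapsto\tau^{\alpha-1}$ is decreasing, so $\partial_t v_h\,(v^{\alpha-1}-v_h^{\alpha-1})\le0$, and passing to the limit and integrating by parts in time gives $\lim_{h\to0}I_h\le I_0$ with
\[
I_0=\frac{1}{\alpha}\Bigl[\int_{B_r(x_0)}v(x,t)^\alpha\psi(x)^2\eta(t)\,dx\Bigr]_{t=t_{\mathrm{in}}}^{t=t_{\mathrm{fin}}}-\frac{1}{\alpha}\int_{t_{\mathrm{in}}}^{t_{\mathrm{fin}}}\int_{B_r(x_0)}v(x,t)^\alpha\psi(x)^2\,\partial_t\eta(t)\,dx\,dt,
\]
where $(t_{\mathrm{in}},t_{\mathrm{fin}})$ is the interval over which the weak formulation is tested. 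The limits of $L_h$ and $A_h$ are identified as in Lemma~\ref{inveng} (via \cite[Lemma~2.2]{KLin} and \cite[Lemma~3.1]{BGK}), so that $0\le I_0+I_1+I_2+I_3$, where $I_1$ is the diagonal nonlocal integral over $B_r(x_0)\times B_r(x_0)$, $I_2$ the tail part over $(\mathbb{R}^N\setminus B_r(x_0))\times B_r(x_0)$, and $I_3$ the local term.

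Next I would estimate the three pieces. For $I_1$, applying \eqref{ine2} with $b=v(x,t)$, $a=v(y,t)$, $\tau_2=\psi(x)$, $\tau_1=\psi(y)$ and using $\tfrac{1-\epsilon}{2}=\tfrac{\alpha}{2}$, $1-\epsilon=\alpha$, I would obtain
\[
I_1\le-\zeta_1(1-\alpha)\!\int\!\!\int\!\!\int\bigl(\psi(x)v(x,t)^{\frac{\alpha}{2}}-\psi(y)v(y,t)^{\frac{\alpha}{2}}\bigr)^2\eta\,d\mu\,dt+\zeta_2(1-\alpha)\!\int\!\!\int\!\!\int\bigl(\psi(x)-\psi(y)\bigr)^2\bigl(v(x,t)^\alpha+v(y,t)^\alpha\bigr)\eta\,d\mu\,dt,
\]
and the first, nonpositive, term will simply be discarded. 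For $I_2$, using $v(x,t)-v(y,t)\le v(x,t)+u_-(y,t)$, the bound $v(x,t)^{\alpha-1}\le l^{-1}v(x,t)^\alpha$, and the fact that $u_-$ vanishes on $B_R(x_0)\times(\tau_1,\tau_2+\tau)$, I would split off the two tail contributions exactly as in \eqref{estI2}. For $I_3=\int\int\nabla v\cdot\nabla\bigl(v^{\alpha-1}\psi^2\eta\bigr)\,dx\,dt$, expanding and writing $(\alpha-1)v^{\alpha-2}|\nabla v|^2=-\tfrac{4(1-\alpha)}{\alpha^2}|\nabla v^{\alpha/2}|^2$ and $v^{\alpha-1}\nabla v=\tfrac{2}{\alpha}v^{\alpha/2}\nabla v^{\alpha/2}$, I would absorb the cross term by Young's inequality \eqref{Young} with $\delta\sim\tfrac{1-\alpha}{\alpha}$, obtaining $I_3\le-\tfrac{2(1-\alpha)}{\alpha^2}\int\int|\nabla v^{\alpha/2}|^2\psi^2\eta\,dx\,dt+\tfrac{C}{1-\alpha}\int\int v^\alpha|\nabla\psi|^2\eta\,dx\,dt$.

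Substituting these estimates into $0\le I_0+I_1+I_2+I_3$, discarding the nonnegative diagonal nonlocal energy term, and dividing by the coefficient $\tfrac{2(1-\alpha)}{\alpha^2}$ of the local energy, I would arrive at an inequality of the shape
\[
\int\int|\nabla v^{\alpha/2}|^2\psi^2\eta\,dx\,dt+\frac{1}{\alpha}\int_{B_r(x_0)}v(x,t)^\alpha\psi^2\eta\,dx\,\Big|_{t=t_{\mathrm{bdry}}}\le\frac{\alpha^2}{1-\alpha}\bigl(\text{the error terms appearing in \eqref{reveng1}}\bigr).
\]
Choosing the time interval to be $(\tau_1,\tau_2+\tau)$, the boundary term at the right endpoint vanishes because $\eta(\tau_2+\tau)=0$ and the one at $\tau_1$ is nonnegative and may be dropped, which gives \eqref{reveng1}. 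For \eqref{reveng2} I would instead keep the boundary term: for a.e.\ $t^*\in(\tau_1,\tau_2)$, testing the weak formulation over the interval $(t^*,\tau_2+\tau)$, the only surviving boundary contribution is $\tfrac{1}{\alpha}\int_{B_r(x_0)}v(x,t^*)^\alpha\psi(x)^2\,dx$ (since $\eta(t^*)=1$ and $\eta(\tau_2+\tau)=0$), all error integrals remain controlled by their counterparts over $(\tau_1,\tau_2+\tau)$, and choosing $t^*$ so that $\int_{B_r(x_0)}v(x,t^*)^\alpha\psi^2\,dx$ is arbitrarily close to $\esssup_{\tau_1<t<\tau_2}\int_{B_r(x_0)}v(x,t)^\alpha\psi^2\,dx$ yields \eqref{reveng2}. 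This is precisely the device used to pass from \eqref{engnegpre} to \eqref{estnegtime} in Lemma~\ref{inveng}.

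The analytic substance — admissibility of $\phi$ via mollification, the identification of $\lim_{h\to0}A_h$, the passage to the limit in $L_h$, and the time integration by parts — is identical to Lemma~\ref{inveng} and may be quoted from there and from \cite{KLin,BGK,Verenacontinuity}. The only genuinely new point, and the one I expect to require the most care, is the bookkeeping of constants: one must track the $\epsilon$-dependent constants $\zeta_1(1-\alpha)$ and $\zeta_2(1-\alpha)$ produced by \eqref{ine2} together with the $\tfrac{1}{1-\alpha}$ and $\tfrac{1}{\alpha}$ factors coming from the time term and from Young's inequality, and verify that, after dividing out the coefficient of the good energy term, they combine into the prefactors $\tfrac{\alpha^2}{1-\alpha}$, $2\alpha$, $\tfrac{2}{1-\alpha}$ and $\tfrac{1}{\alpha}$ displayed in \eqref{reveng1} and \eqref{reveng2}.
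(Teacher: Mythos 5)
Your proposal follows the paper's own proof essentially line by line: the same test function $v^{-\epsilon}\psi^2\eta$ with $\epsilon=1-\alpha$, the same time-mollification justification and identification of the limits of $I_h$, $L_h$, $A_h$, the same application of inequality \eqref{ine2} to the diagonal nonlocal term, the same tail splitting for the off-diagonal term, Young's inequality for the local term, and the same choice of the left time endpoint (near the time achieving the essential supremum) to obtain \eqref{reveng2}. The only slip is the stated Young parameter $\delta\sim\tfrac{1-\alpha}{\alpha}$: to absorb the cross term into $-\epsilon\int\!\!\int v^{-\epsilon-1}|\nabla v|^2\psi^2\eta$ one needs $\delta$ of order $1-\alpha$ (the paper takes $\delta=\tfrac{1-\alpha}{4}$), but since the bound you then record for $I_3$ is the correct one, this is cosmetic rather than a gap.
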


\begin{proof}
Let $\epsilon\in(0,1)$ and $\psi\in C_{c}^{\infty}(B_r(x_0))$. Assume that $t_1\in(\tau_1,\tau_2)$, $t_2=\tau_2+l$ and $\eta\in C^\infty(t_1,t_2)$ such that $\eta(t)=1$ for all $t\leq t_1$ and $\eta(t_2)=0$. Since $v=u+l$ is again a weak supersolution of \eqref{Problem}, choosing 
$$
\phi(x,t)=v(x,t)^{-\epsilon}\psi(x)^2\eta(t)
$$ 
as a test function in \eqref{wksol} (which is again justified by mollifying in time as in the proof of Lemma \ref{inveng}), we obtain
\begin{equation}\label{posengtest}
\begin{split}
0&\leq J_0+J_1+J_2+J_3,
\end{split}
\end{equation}
where
\begin{equation}\label{estJ0}
J_0=-\frac{1}{1-\epsilon}\int_{B_r(x_0)}v^{1-\epsilon}(x,t_1)\psi(x)^2\,dx
-\frac{1}{1-\epsilon}\int_{t_1}^{t_2}\int_{B_r(x_0)}v(x,t)^{1-\epsilon}\psi(x)^2\partial_{t}\eta(t)\,dx\,dt,
\end{equation}
$$
J_1=\int_{t_1}^{t_2}\int_{B_r(x_0)}\int_{B_r(x_0)}\big(v(x,t)-v(y,t)\big)\big(\phi(x,t)-\phi(y,t)\big)\eta(t)\,d\mu\,dt,
$$
$$
J_2=2\int_{t_1}^{t_2}\int_{\mathbb{R}^N\setminus B_r(x_0)}\int_{B_r(x_0)}\big(v(x,t)-v(y,t)\big)\phi(x,t)\,d\mu\,dt
$$
and
$$
J_3=\int_{t_1}^{t_2}\int_{B_r(x_0)}\nabla v\nabla\phi\,dx\,dt.
$$
\textbf{Estimate of $J_1$:} Using [\eqref{ine2}, Lemma \ref{auxineq}], for $\zeta(\epsilon)=\frac{4\epsilon}{1-\epsilon}$, $\zeta_1(\epsilon)=\frac{\zeta(\epsilon)}{6}$ and $\zeta_2(\epsilon)=\zeta(\epsilon)+\frac{9}{\epsilon}$, we have
\begin{equation}\label{estJ1}
\begin{split}
J_1&=\int_{t_1}^{t_2}\int_{B_r(x_0)}\int_{B_r(x_0)}\big(v(x,t)-v(y,t)\big)\big(v(x,t)^{-\epsilon}\psi(x)^2-v(y,t)^{-\epsilon}\psi(y)^2\big)\eta(t)\,d\mu\,dt\\
&\leq -\zeta_1(\epsilon)\int_{t_1}^{t_2}\int_{B_r(x_0)}\int_{B_r(x_0)}\big|\psi(x)v(x,t)^\frac{1-\epsilon}{2}-\psi(y)v(y,t)^\frac{1-\epsilon}{2}\big|^2\eta(t)\,d\mu\, dt\\
&+\zeta_2(\epsilon)\int_{t_1}^{t_2}\int_{B_r(x_0)}\int_{B_r(x_0)}\big(\psi(x)-\psi(y)\big)^2 \big(v(x,t)^{1-\epsilon}+v(y,t)^{1-\epsilon}\big)\eta(t)\,d\mu\, dt.
\end{split}
\end{equation}
\textbf{Estimate of $J_2$:}
Since $l>0$ and $u\geq 0$ in $B_R(x_0)\times(t_1,t_2)$, we have $v\geq l$ in $B_R(x_0)\times(t_1,t_2)$ and following the same arguments as in the proof of the estimate \eqref{estI2}, we have
\begin{equation}\label{estJ2}
\begin{split}
J_2&=2\int_{t_1}^{t_2}\int_{\mathbb{R}^N\setminus B_r(x_0)}\int_{B_r(x_0)}\big(v(x,t)-v(y,t)\big)v(x,t)^{-\epsilon}\psi(x)^2\eta(t)\,d\mu\,dt\\
&\leq 2\Lambda\esssup_{x\in\mathrm{supp}\psi}\int_{\mathbb{R}^N\setminus B_r(x_0)}\frac{dy}{|x-y|^{N+2s}}\int_{t_1}^{t_2}\int_{B_r(x_0)}v(x,t)^{1-\epsilon}\psi(x,t)^{2}\eta(t)\,dx\,dt\\
&+\frac{2\Lambda}{l}\esssup_{t_1<t<t_2,\,x\in\mathrm{supp}\psi}\int_{\mathbb{R}^N\setminus B_R(x_0)}\frac{u_-(y,t)}{|x-y|^{N+2s}}\,dy\int_{t_1}^{t_2}\int_{B_r(x_0)}v(x,t)^{1-\epsilon}\psi(x,t)^{2}\eta(t)\,dx\,dt.
\end{split}
\end{equation}
\textbf{Estimate of $J_3$:} We observe that
\begin{equation}\label{estJ3pre}
\begin{split}
J_3&=\int_{t_1}^{t_2}\int_{B_r(x_0)}\nabla v\nabla\big(v^{-\epsilon}\psi^2\big)\eta(t)\,dx\,dt\\
&=-\epsilon\int_{t_1}^{t_2}\int_{B_r(x_0)}v^{-\epsilon-1}|\nabla v|^2\psi^2\eta\,dx\,dt
+2\int_{t_1}^{t_2}\int_{B_r(x_0)}\psi\eta v^{-\epsilon}\nabla v\nabla\psi\,dx\,dt.
\end{split}
\end{equation}
Using Young's inequality \eqref{Young} for $\delta=\frac{\epsilon}{4}$, we obtain
\begin{equation}\label{Youngpos}
2\psi v^{-\epsilon}|\nabla v| |\nabla\psi|
=\big(\sqrt{2}\psi|\nabla v| v^\frac{-\epsilon-1}{2}\big)\big(\sqrt{2}v^\frac{1-\epsilon}{2}|\nabla\psi|\big)
\leq\frac{\epsilon}{2}\psi^2|\nabla v|^2 v^{-\epsilon-1}+\frac{2}{\epsilon}v^{1-\epsilon}|\nabla\psi|^2.
\end{equation}
Hence using \eqref{Youngpos} in \eqref{estJ3pre}, we have
\begin{equation}\label{estJ3}
\begin{split}
J_3&\leq-\frac{\epsilon}{2}\int_{t_1}^{t_2}\int_{B_r(x_0)}v^{-\epsilon-1}|\nabla v|^2\psi^2\eta\,dx\,dt
+\frac{2}{\epsilon}\int_{t_1}^{t_2}\int_{B_r(x_0)}v^{1-\epsilon}|\nabla\psi|^2\eta\,dx\,dt\\
&=-\frac{2\epsilon}{(1-\epsilon)^2}\int_{t_1}^{t_2}\int_{B_r(x_0)}\big|\nabla v^\frac{1-\epsilon}{2}\big|^2\psi^2\eta\,dx\,dt
+\frac{2}{\epsilon}\int_{t_1}^{t_2}\int_{B_r(x_0)}v^{1-\epsilon}|\nabla\psi|^2\eta\,dx\,dt,
\end{split}
\end{equation}
where in the final step we have used the fact that $v^{-\epsilon-1}|\nabla v|^2=\frac{4}{(1-\epsilon)^2}\big|\nabla v^\frac{1-\epsilon}{2}\big|^2$.

Combining the estimates \eqref{estJ0}, \eqref{estJ1}, \eqref{estJ2} and \eqref{estJ3} in \eqref{posengtest}, we obtain
\begin{equation}\label{posengpre}
\begin{split}
&\int_{t_1}^{t_2}\int_{B_r(x_0)}|\nabla v^\frac{1-\epsilon}{2}|^2\psi^2\eta\,dx\,dt\\
&+\frac{(1-\epsilon)^2\zeta_1(\epsilon)}{2\epsilon}\int_{t_1}^{t_2}\int_{B_r(x_0)}\int_{B_r(x_0)}\left|\psi(x)v(x,t)^\frac{1-\epsilon}{2}-\psi(y)v(y,t)^\frac{1-\epsilon}{2}\right|^2\eta(t)\,d\mu\,dt\\
&+\frac{1-\epsilon}{2\epsilon}\int_{B_r(x_0)}v(x,t_1)^{1-\epsilon}\psi(x)^2\,dx\,dt\\
&\leq\frac{(1-\epsilon)^2}{2\epsilon}\Bigg[\zeta_2(\epsilon)\int_{t_1}^{t_2}\int_{B_r(x_0)}\int_{B_r(x_0)}\big(\psi(x)-\psi(y)\big)^2 \big(v(x,t)^{1-\epsilon}+v(y,t)^{1-\epsilon}\big)\eta(t)\,d\mu\,dt\\
&+2\Lambda\esssup_{x\in\mathrm{supp}\psi}\int_{\mathbb{R}^N\setminus B_r(x_0)}\frac{dy}{|x-y|^{N+2s}}\int_{t_1}^{t_2}\int_{B_r(x_0)}v(x,t)^{1-\epsilon}\psi(x,t)^{2}\eta(t)\,dx\,dt\\
&+\frac{2\Lambda}{l}\esssup_{t_1<t<t_2,\,x\in\mathrm{supp}\psi}\int_{\mathbb{R}^N\setminus B_R(x_0)}\frac{u_-(y,t)}{|x-y|^{N+2s}}\,dy\int_{t_1}^{t_2}\int_{B_r(x_0)}v(x,t)^{1-\epsilon}\psi(x,t)^{2}\eta(t)\,dx\,dt\\
&+\frac{2}{\epsilon}\int_{t_1}^{t_2}\int_{B_r(x_0)}v^{1-\epsilon}|\nabla\psi|^2\eta\,d x dt+\frac{1}{1-\epsilon}\int_{t_1}^{t_2}\int_{B_r(x_0)}v(x,t)^{1-\epsilon}\psi(x)^2|\partial_{t}\eta(t)|\,dx\,dt\Bigg].
\end{split}
\end{equation}
Define $\alpha=1-\epsilon$. Then, letting  $t_1\to\tau_1$ in \eqref{posengpre} we obtain \eqref{reveng1}. Next, choosing $t_1\in(\tau_1,\tau_2)$ such that
\begin{equation*}
\int_{B_r(x_0)}v(x,t_1)^{1-\epsilon}\psi(x)^2\,dx\geq\esssup_{\tau_1<t<\tau_2}\int_{B_r(x_0)}v(x,t)^{1-\epsilon}\psi(x)^2\,dx, 
\end{equation*}
and using \eqref{posengpre}, the estimate \eqref{reveng2} follows.
\end{proof}

\section{Estimates for weak supersolutions}
We establish an estimate of supremum for weak supersolutions of \eqref{Problem}.
\begin{Lemma}\label{invlemma}
 Assume that $u$ is a weak supersolution of \eqref{Problem} such that $u\geq 0$ in $B_R(x_0)\times\big(t_0-r^2,t_0\big)\subset\Om\times(0,T)$. 
 Let $0<r\leq 1$ be such that $r<\frac{R}{2}$.
 Let $d>0$ and $v=u+l$, with 
\begin{equation}\label{l1}
l\geq \Big(\frac{r}{R}\Big)^2\mathrm{Tail}_{\infty}\big(u_-;x_0,R,t_0-r^2,t_0\big)+d,
\end{equation}
where $\mathrm{Tail}_{\infty}$ is defined by \eqref{loctail}. 
For any $0<\beta<1$ and $\kappa$ is given by \eqref{kappa}, there exists constants $C=C(N,s,\Lambda)$ and $\theta=\theta(\kappa)$ such that
\begin{equation}\label{invest}
\esssup_{U^-(\sigma' r)}v^{-1}\leq\left(\frac{C}{(\sigma-\sigma')^\theta}\right)^\frac{1}{\beta}\left(\fint_{U^-(\sigma r)}v^{-\beta}\,dx\,dt\right)^\frac{1}{\beta},
\end{equation}
for every $\frac{1}{2}\leq\sigma'<\sigma\leq 1$, where 
$U^-(\eta r)=B_{\eta r}(x_0)\times(t_0-(\eta r)^2,t_0)$, $\frac{1}{2}\leq\eta\leq 1$.
\end{Lemma}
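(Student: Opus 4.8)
The plan is to run a Moser iteration on the negative powers of $v=u+l$ — these are well defined and bounded since $l\le v\le\|u\|_{L^\infty}+l$ — using the two energy estimates of Lemma~\ref{inveng} together with a parabolic Sobolev inequality built from Lemma~\ref{c.omega_sobo}. Fix the gain exponent $\chi=2-\tfrac2\kappa>1$. For $\tfrac12\le\sigma'<\sigma\le1$ put $\sigma_j=\sigma'+(\sigma-\sigma')2^{-j}$, $\rho_j=\sigma_j r$ and $Q_j=U^-(\rho_j)=B_{\rho_j}(x_0)\times(t_0-\rho_j^2,t_0)$, so that $Q_0=U^-(\sigma r)$, $\rho_j-\rho_{j+1}=(\sigma-\sigma')2^{-j-1}r$ and $Q_j$ decreases to $U^-(\sigma'r)$. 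The iteration runs over the exponents $m_j=\beta\chi^j\to\infty$ and the quantities $Y_j=\bigl(\fint_{Q_j}v^{-m_j}\,dx\,dt\bigr)^{1/m_j}$, the target being a recursion $Y_{j+1}\le K_j^{1/m_j}Y_j$ with $\prod_jK_j^{1/m_j}$ convergent; letting $j\to\infty$, and using $m_j\to\infty$ together with $Q_j\downarrow U^-(\sigma'r)$ and $|Q_j|\to|U^-(\sigma'r)|>0$, one then obtains $\esssup_{U^-(\sigma'r)}v^{-1}\le\bigl(\prod_jK_j^{1/m_j}\bigr)Y_0$, and it remains to show the product is $\le\bigl(C(\sigma-\sigma')^{-\theta}\bigr)^{1/\beta}$.

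The first step is to estimate the right-hand sides of Lemma~\ref{inveng}. At level $j$ apply \eqref{negeng1} and \eqref{negeng2} with ``$r$'' $=\rho_j$, $\tau_2=t_0$, $\tau_1-\tau=t_0-\rho_j^2$, $\tau_1=t_0-\rho_{j+1}^2$, $m=m_j$, a spatial cut-off $\psi_j\in C_c^\infty(B_{\rho_j}(x_0))$ with $0\le\psi_j\le1$, $\psi_j\equiv1$ on $B_{\rho_{j+1}}(x_0)$, $\mathrm{supp}\,\psi_j\subset B_{(\rho_j+\rho_{j+1})/2}(x_0)$ and $|\nabla\psi_j|\le C(\rho_j-\rho_{j+1})^{-1}$, and a temporal cut-off $\eta_j$ with $\eta_j\equiv1$ on $[t_0-\rho_{j+1}^2,t_0]$, $\eta_j\equiv0$ on $(-\infty,t_0-\rho_j^2]$ and $|\partial_t\eta_j|\le C(\rho_j-\rho_{j+1})^{-1}r^{-1}$ (possible because $\rho_j^2-\rho_{j+1}^2\ge(\rho_j-\rho_{j+1})r$); admissibility of the associated test functions is the one justified in the proof of Lemma~\ref{inveng} by the time mollification \eqref{mol}. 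One has to bound each term on the right of \eqref{negeng1}--\eqref{negeng2} by $C(1+m_j)^2(\rho_j-\rho_{j+1})^{-2}\iint_{Q_j}v^{-m_j}$ with $C=C(N,s,\Lambda)$. Since $0\le\psi_j,\eta_j\le1$, one has $(v/\psi_j)^{-m_j}\le v^{-m_j}$ and $\psi_j^{m_j}\le1$ on the integrands; the nonlocal difference term is handled by $(\psi_j(x)-\psi_j(y))^2\le2\min\{1,\|\nabla\psi_j\|_\infty^2|x-y|^2\}$ and $\int_{\mathbb{R}^N}\min\{1,\|\nabla\psi_j\|_\infty^2|z|^2\}\,|z|^{-N-2s}\,dz\le C\|\nabla\psi_j\|_\infty^{2s}$; the local ``boundary'' term obeys $\esssup_{x\in\mathrm{supp}\,\psi_j}\int_{\mathbb{R}^N\setminus B_{\rho_j}(x_0)}|x-y|^{-N-2s}\,dy\le C(\rho_j-\rho_{j+1})^{-2s}$ since $\mathrm{supp}\,\psi_j$ stays at distance $\ge\tfrac12(\rho_j-\rho_{j+1})$ from $\partial B_{\rho_j}(x_0)$; and the long-range tail term is absorbed thanks to the choice of $l$ — because $r<\tfrac R2$ forces $|x-y|\ge\tfrac12|y-x_0|$ for $x\in B_r(x_0)$, $y\notin B_R(x_0)$, the lower bound \eqref{l1} gives
\[
\frac1l\esssup_{t,\,x\in\mathrm{supp}\,\psi_j}\int_{\mathbb{R}^N\setminus B_R(x_0)}\frac{u_-(y,t)}{|x-y|^{N+2s}}\,dy\le\frac{2^{N+2s}R^{-2}\,\mathrm{Tail}_\infty(u_-;x_0,R,t_0-r^2,t_0)}{l}\le\frac{2^{N+2s}}{r^2}.
\]
Because $0<r\le1$, $\rho_j-\rho_{j+1}\le1$ and $0<2s<2$, each of $\|\nabla\psi_j\|_\infty^{2s}$, $(\rho_j-\rho_{j+1})^{-2s}$, $r^{-2}$ and $|\partial_t\eta_j|$ is $\le C(\rho_j-\rho_{j+1})^{-2}$, and the coefficients appearing in \eqref{negeng1}--\eqref{negeng2} are $\le C(1+m_j)^2$; collecting everything yields the claimed bound simultaneously for $\iint_{Q_{j+1}}|\nabla v^{-m_j/2}|^2$ and for $\esssup_{t_0-\rho_{j+1}^2<t<t_0}\int_{B_{\rho_j}(x_0)}v^{-m_j}\psi_j^{m_j+2}$.

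The second step combines this with the parabolic Sobolev inequality. Put $w=v^{-m_j/2}\psi_j^{(m_j+2)/2}$; for a.e.\ $t$ it lies in $W_0^{1,2}(B_{\rho_j}(x_0))$, and $w=v^{-m_j/2}$ on $Q_{j+1}$. Applying Lemma~\ref{c.omega_sobo} to $w(\cdot,t)$ slicewise in $t\in(t_0-\rho_{j+1}^2,t_0)$, interpolating $L^{2\chi}$ between $L^\kappa$ and $L^2$, and integrating in $t$ gives
\[
\iint_{Q_{j+1}}v^{-m_j\chi}\,dx\,dt\le C\,|B_{\rho_j}|^{2(\frac1N-\frac12+\frac1\kappa)}\Bigl(\esssup_{t_0-\rho_{j+1}^2<t<t_0}\int_{B_{\rho_j}}w^2\,dx\Bigr)^{\chi-1}\iint_{B_{\rho_j}(x_0)\times(t_0-\rho_{j+1}^2,t_0)}|\nabla w|^2\,dx\,dt,
\]
and, using $|\nabla w|^2\le2\psi_j^{m_j+2}|\nabla v^{-m_j/2}|^2+C(m_j+2)^2v^{-m_j}\psi_j^{m_j}|\nabla\psi_j|^2$ together with $\eta_j\equiv1$ on $(t_0-\rho_{j+1}^2,t_0)$, both factors on the right are controlled by the first step. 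Dividing by $|Q_{j+1}|$ and using $|Q_k|=|B_1|\rho_k^{N+2}$, $\rho_{j+1}\le\rho_j\le2\rho_{j+1}$ and $\rho_j-\rho_{j+1}=(\sigma-\sigma')2^{-j-1}r$, the powers of $r$ cancel (the Sobolev volume factor exactly compensates $|Q_j|^\chi/|Q_{j+1}|$ against $(\rho_j-\rho_{j+1})^{-2\chi}$), so that
\[
\Bigl(\fint_{Q_{j+1}}v^{-m_j\chi}\,dx\,dt\Bigr)^{1/\chi}\le\frac{C(1+m_j)^2\,4^j}{(\sigma-\sigma')^2}\fint_{Q_j}v^{-m_j}\,dx\,dt,
\]
i.e.\ $Y_{j+1}\le\bigl(C(1+\beta\chi^j)^2\,4^j(\sigma-\sigma')^{-2}\bigr)^{1/(\beta\chi^j)}Y_j$. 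Since $\sum_j\chi^{-j}$, $\sum_j j\chi^{-j}$ and $\sum_j\chi^{-j}\log(1+\beta\chi^j)$ all converge, with bounds uniform in $\beta\in(0,1)$, iterating and letting $j\to\infty$ gives
\[
\esssup_{U^-(\sigma'r)}v^{-1}\le\prod_{j\ge0}\Bigl(\frac{C(1+\beta\chi^j)^2\,4^j}{(\sigma-\sigma')^2}\Bigr)^{1/(\beta\chi^j)}Y_0\le\Bigl(\frac{C}{(\sigma-\sigma')^{\theta}}\Bigr)^{1/\beta}\Bigl(\fint_{U^-(\sigma r)}v^{-\beta}\,dx\,dt\Bigr)^{1/\beta}
\]
with $C=C(N,s,\Lambda)$ and $\theta=\tfrac{2\chi}{\chi-1}=\tfrac{4(\kappa-1)}{\kappa-2}$, so that $\theta=\theta(\kappa)$; this is \eqref{invest}.

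The main obstacle is the first step: collapsing the nonlocal and tail contributions in \eqref{negeng1}--\eqref{negeng2} into $C(1+m_j)^2(\rho_j-\rho_{j+1})^{-2}\iint_{Q_j}v^{-m_j}$ — in particular using $r<R/2$ and the lower bound \eqref{l1} on $l$ to eliminate the long-range tail, and using $r\le1$, $2s<2$ so the fractional exponents do not degrade the scaling — and then, in the iteration, checking that the powers of $r$ cancel and that the resulting infinite product is finite with a constant independent of $\beta$. The remaining ingredients (admissibility of the test functions and the passage $Y_j\to\esssup v^{-1}$) are routine given Lemma~\ref{inveng} and $v\ge l>0$.
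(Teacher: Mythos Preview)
Your proof is correct and follows the same overall strategy as the paper: a Moser iteration built from the energy estimates of Lemma~\ref{inveng} combined with the Sobolev inequality of Lemma~\ref{c.omega_sobo}, with the tail term absorbed via the lower bound \eqref{l1} on $l$ and the nonlocal pieces controlled using $r\le 1$ and $2s<2$. The one notable difference is the endgame: the paper runs the iteration with exponents $m_j=2\gamma^j$ (so starting from $m_0=2$), obtains \eqref{invest} first for $\beta=2$, and then reaches $0<\beta<1$ through a separate Young-inequality step plus an absorption lemma \`a la Giaquinta; you instead launch the iteration directly at $m_0=\beta$ and carry the factor $(\sigma-\sigma')^{-2}$ through the infinite product, which is legitimate because Lemma~\ref{inveng} is valid for every $m>0$ and $\sum_j\chi^{-j}\log(1+\beta\chi^j)$ is bounded uniformly in $\beta\in(0,1)$. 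This shortcut is slightly more economical (it avoids the extra iteration lemma) and yields the same exponent $\theta=2\chi/(\chi-1)$; conversely, the paper's two-step route has the minor advantage that the Moser iteration itself only needs $m\ge 2$, so one never has to track the behaviour of the energy constants near $m=0$. Either way, the substance of the argument is the same.
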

\begin{proof}
Let us divide the interval $(\sigma',\sigma)$ into $k$ parts by setting
$$
\sigma_0=\sigma,\,\sigma_k=\sigma',\,\sigma_j=\sigma-(\sigma-\sigma')\big(1-\gamma^{-j}\big),
\quad j=1,\dots,k-1,
$$
where $\gamma=2-\frac{2}{\kappa}$, where $\kappa$ is given by \eqref{kappa}. 
Let $r_j=\sigma_j r$, $B_j=B_{r_j}(x_0)$,
$\Gamma_j=(t_0-r_j^2,t_0)$ and $Q_j=U^-(r_j)=B_{j}\times\Gamma_j$.
Let $\psi_j\in C_{c}^\infty(B_j)$ and $\eta_j\in C^\infty(\mathbb{R})$ be such that
$0\leq\psi_j\leq 1$ in $B_j$, $\psi_j\equiv 1$ in $B_{j+1}$,
$0\leq\eta_j\leq 1$ in $\Gamma_j$, $\eta_j(t)=1$ for every $t\geq t_0-r_{j+1}^2$,
$\eta_j(t)=0$ for every $t\leq t_0-r_j^2$,
$\mathrm{dist}\big(\mathrm{supp}\,\psi_j,\mathbb{R}^N\setminus B_j\big)\geq 2^{-j-1}r$,
\[
|\nabla\psi_j|\leq\frac{8\gamma^j}{r(\sigma-\sigma')}\text{ in }B_j
\quad\text{and}\quad
\left|\frac{\partial\eta_j}{\partial t}\right|\leq 8\left(\frac{\gamma^j}{r(\sigma-\sigma')}\right)^2\text{ in }\Gamma_j.
\]
Let $\epsilon\geq 1$ and $m=1+\epsilon$. Then, for $w=v^{-1}$, using Lemma \ref{c.omega_sobo} along with H\"older's inequality with exponents $t=\frac{\kappa}{\kappa-2}$ and $t'=\frac{\kappa}{2}$, for some positive constant $C=C(N)$, we have
\begin{equation}\label{invSob}
\begin{split}
&\fint_{Q_{j+1}}w^{\gamma m}\,dx\,dt
=\fint_{\Gamma_{j+1}}\fint_{B_{j+1}}w^{\gamma m}\,dx\,dt
=\fint_{\Gamma_{j+1}}\fint_{B_{j+1}}w^{\frac{m}{t}+\frac{m\kappa}{2t'}}\,dx\,dt\\
&\leq\fint_{\Gamma_{j+1}}\frac{1}{|B_{j+1}|}\left(\int_{B_{j+1}}w^m\,dx\right)^\frac{1}{t}\left(\int_{B_{j+1}}w^\frac{m\kappa}{2}\,dx\right)^\frac{1}{t'}dt\\
&\leq\fint_{\Gamma_{j+1}}\frac{1}{|B_{j+1}|}\left(\esssup_{\Gamma_{j+1}}\int_{B_{j}}w^m\psi_j^{m+2}\,dx \right)^\frac{1}{t}\left(\int_{B_{j}}w^\frac{m\kappa}{2}\psi_j^\frac{(m+2)\kappa}{2}\eta_j(t)^\frac{\kappa}{2}\,dx\right)^\frac{1}{t'}dt\\
&=\frac{|\Gamma_j||B_j|}{|\Gamma_{j+1}||B_{j+1}|}\left(\esssup_{\Gamma_{j+1}}\fint_{B_{j}}w^m\psi_j^{m+2}\,dx \right)^\frac{1}{t}\fint_{\Gamma_{j}}\left(\fint_{B_{j}}w^\frac{m\kappa}{2}\psi_j^\frac{(m+2)\kappa}{2}\eta_j(t)^\frac{\kappa}{2}\,dx\right)^\frac{1}{t'}dt\\
&\leq C\left(\esssup_{\Gamma_{j+1}}\fint_{B_{j}}w^m\psi_j^{m+2}\,dx\right)^\frac{1}{t}r^2\fint_{\Gamma_j}\fint_{B_j}\left|\nabla\Big(w^\frac{m}{2}\psi_j^\frac{m+2}{2}\eta_j^\frac{1}{2}\Big)\right|^2\,dx\,dt\\
&\leq\frac{C}{r_j^{\gamma N}}\left(\esssup_{\Gamma_{j+1}}\int_{B_{j}}w^m\psi_j^{m+2}\,dx \right)^\frac{1}{t}\\
&\quad\quad\cdot\int_{\Gamma_j}\int_{B_j}\left(\big|\nabla\big(w^\frac{m}{2}\big)\big|^2\psi_j^{(m+2)}\eta_j(t)+m^2 w^m |\nabla\psi_j|^2\right)\,dx\,dt.
\end{split}
\end{equation}
Let
\[
I=\esssup_{\Gamma_{j+1}}\int_{B_{j}}w^m\psi_j^{m+2}\,dx\,dt,\quad 
J=\int_{\Gamma_j}\int_{B_j}\big|\nabla\big(w^\frac{m}{2}\big)\big|^2\psi_j^{(m+2)}\eta_j(t)\,dx\,dt
\]
and
\[
K=m^2\int_{\Gamma_j}\int_{B_j}w^m |\nabla\psi_j|^2\,dx\,dt.
\]
Setting $r=r_j, \tau_1=t_0-r_{j+1}^2$, $\tau_2=t_0$ and $\tau=r_j^2-r_{j+1}^2$ in Lemma \ref{inveng}, for some constant $C=C(\Lambda)>0$, we have
\begin{equation}\label{IJexp}
\begin{split}
I,J&\leq Cm^4\Bigg[\int_{\Gamma_j}\int_{B_j}\big(\psi_j(x)-\psi_j(y)\big)^2\Big[{w(x,t)}^m{\psi_j(x)}^{m}+{w(y,t)}^m{\psi_j(y)}^{m}\Big]\eta_j(t)\,d\mu\,dt\\
&+2\esssup_{x\in\mathrm{supp}\psi_j}\int_{\mathbb{R}^N\setminus B_j}\frac{dy}{|x-y|^{N+2s}}\int_{\Gamma_j}\int_{B_j}w(x,t)^{m}\psi_j(x)^{m+2}\eta_j(t)\,dx\,dt\\
&+\frac{2}{l}\esssup_{t\in\Gamma_j,\,x\in\mathrm{supp}\psi_j}\int_{\mathbb{R}^N\setminus B_R(x_0)}\frac{u_-(y,t)}{|x-y|^{N+2s}}\,dy\int_{\Gamma_j}\int_{B_j}w(x,t)^{m}\psi_j(x,t)^{m+2}\eta_j(t)\,dx\, dt\\
&+\int_{\Gamma_j}\int_{B_j}|\nabla\psi_j|^2 w(x,t)^{m}\psi_j(x)^{m}\eta_j(t)\,dx\,dt
+\int_{\Gamma_j}\int_{B_j}w(x,t)^{m}\psi_j(x)^{m+2}|\partial_t\eta_j(t)|\,dx\,dt\Bigg]\\
&=I_1+I_2+I_3+I_4+I_5.
\end{split}
\end{equation}
\textbf{Estimate of $I_1$:} Using the properties of $\psi_j,\eta_j$ and the fact that $0<r\leq 1$, we have
\begin{equation}\label{estI_1}
\begin{split}
I_1&=Cm^4\int_{\Gamma_j}\int_{B_j}(\psi_j(x)-\psi_j(y))^2\big({w(x,t)}^m{\psi(x)}^{m}+{w(y,t)}^m{\psi_j(y)}^{m}\big)\eta_j(t)\,d\mu\,dt\\
&\leq\frac{C m^4\gamma^{2j}}{r^2(\sigma-\sigma')^2}\esssup_{x\in B_j}\int_{B_j}\frac{|x-y|^2}{|x-y|^{N+2s}}\,dy\int_{\Gamma_j}\int_{B_j}w(x,t)^{m}\,dx\,dt\\
&\leq\frac{C m^4\gamma^{2j}}{r_j^{2s}(\sigma-\sigma')^2}\int_{\Gamma_j}\int_{B_j}w(x,t)^{m}\,dx\,dt\\
&\leq \frac{C m^4\gamma^{2j}}{r_j^{2}(\sigma-\sigma')^2}\int_{\Gamma_j}\int_{B_j}w(x,t)^{m}\,dx\,dt,
\end{split}
\end{equation}
for some constant $C=C(N,s,\Lambda)>0$.\\
\textbf{Estimate of $I_2$:} Without loss of generality, we may assume that $x_0=0$. Then, by noting the properties of $\psi_j$ and $\eta_j$, for any $x\in\mathrm{supp}\,\psi_j$ and $y\in\mathbb{R}^N\setminus B_j$, we have 
$$
\frac{1}{|x-y|}=\frac{1}{|y|}\frac{|y|}{|x-y|}\leq\frac{1}{|y|}\left(1+\frac{|x|}{|x-y|}\right)\leq\frac{1}{|y|}\left(1+\frac{r}{2^{-j-1}r}\right)\leq\frac{2^{j+2}}{|y|}.
$$
This implies
\begin{equation}\label{estI_2}
\begin{split}
I_2&=Cm^4\esssup_{x\in\mathrm{supp}\,\psi_j}\int_{\mathbb{R}^N\setminus B_j}\frac{dy}{|x-y|^{N+2s}}\int_{\Gamma_j}\int_{B_j}w(x,t)^{m}\psi_j(x)^{m+2}\eta_j(t)\,dx\,dt\\
&\leq C2^{N+2s+2}m^4 2^{j(N+2s)}\esssup_{x\in\mathrm{supp}\,\psi_j}\int_{\mathbb{R}^N\setminus B_j}\frac{dy}{|y|^{N+2s}}\int_{\Gamma_j}\int_{B_j}w(x,t)^{m}\,dx\,dt\\
&=\frac{Cm^4 2^{j(N+2s)}}{r_j^{2s}}\int_{\Gamma_j}\int_{B_j}w(x,t)^{m}\,dx\,dt\\
&\leq\frac{Cm^4 2^{j(N+2s)}}{r_j^{2}(\sigma-\sigma')^2}\int_{\Gamma_j}\int_{B_j}w(x,t)^{m}\,dx\,dt,
\end{split}
\end{equation}
for some constant $C=C(N,s,\Lambda)>0$.\\
\textbf{Estimate of $I_3$:} without loss of generality, again we assume that $x_0=0$. Let $x\in\mathrm{supp}\,\psi_j$ and $y\in B_j$, then 
$$
\frac{1}{|x-y|}\leq\frac{1}{|y|}\left(1+\frac{r}{R-r}\right)\leq\frac{2}{|y|}.
$$
By \eqref{l1} we have
\begin{equation}\label{estI_3}
\begin{split}
I_3&=m^4\frac{C}{l}\esssup_{t\in\Gamma_j,\,x\in\mathrm{supp}\psi_j}\int_{\mathbb{R}^N\setminus B_R(x_0)}\frac{u_-(y,t)}{|x-y|^{N+2s}}\,dy\int_{\Gamma_j}\int_{B_j}w(x,t)^{m}\psi_j(x,t)^{m+2}\eta_j(t)\,dx\,dt\\
&\leq\frac{Cm^4}{l}R^{-2}\mathrm{Tail}_\infty\,\big(u_-;0,R,t_0-r^2,t_0\big)\int_{\Gamma_j}\int_{B_j}w(x,t)^{m}\psi_j(x,t)^{m+2}\eta_j(t)\,dx\,dt\\
&\leq\frac{Cm^4}{r^2(\sigma-\sigma')^2}\int_{\Gamma_j}\int_{B_j}w(x,t)^{m}\,dx\,dt,
\end{split}
\end{equation}
for some constant $C=C(N,s,\Lambda)>0$.\\
\textbf{Estimate of $I_4$:} By the properties of $\psi_j$ and $\eta_j$, we obtain
\begin{equation}\label{estI_4}
\begin{split}
I_4&=C m^4\int_{\Gamma_j}\int_{B_j}|\nabla\psi_j|^2 w(x,t)^{m}\psi_j(x)^{m}\eta_j(t)\,dx\,dt\\
&\leq\frac{Cm^4\gamma^{2j}}{r_j^{2}(\sigma-\sigma')^2}\int_{\Gamma_j}\int_{B_j}w(x,t)^{m}\,dx\,dt,
\end{split}
\end{equation}
for some constant $C=C(N,s,\Lambda)>0$.\\
\textbf{Estimate of $I_5$:} By the properties of $\psi_j$ and $\eta_j$, we have
\begin{equation}\label{estI_5}
\begin{split}
I_5&=C m^4\int_{\Gamma_j}\int_{B_j}w(x,t)^{m}\psi_j(x)^{m+2}|\partial_t\eta_j(t)|\,dx\,dt\\
&\leq\frac{Cm^4\gamma^{2j}}{r_j^{2}(\sigma-\sigma')^2}\int_{\Gamma_j}\int_{B_j}w(x,t)^{m}\,dx\,dt,
\end{split}
\end{equation}
for some constant $C=C(N,s,\Lambda)>0$. Plugging the estimates \eqref{estI_1}\, \eqref{estI_2}, \eqref{estI_3}, \eqref{estI_4} and \eqref{estI_5} in \eqref{IJexp}, since $\gamma<2$, we obtain
\begin{equation}\label{estIJ}
\begin{split}
I,J&\leq\frac{Cm^4 2^{j(N+4)}}{r_j^2(\sigma-\sigma')^2}\int_{\Gamma_j}\int_{B_j}w(x,t)^{m}\,dx\,dt,
\end{split}
\end{equation}
for some positive constant $C=C(N,s,\Lambda)$. Again, using the properties of $\psi_j$, we obtain
\begin{equation}\label{estK}
\begin{split}
K&\leq\frac{Cm^4 2^{2j}}{r_j^2(\sigma-\sigma')^2}\int_{\Gamma_j}\int_{B_j}w(x,t)^{m}\,dx\,dt,
\end{split}
\end{equation}
for some positive constant $C=C(N,s,\Lambda)$. Employing the estimates \eqref{estIJ} and \eqref{estK} in \eqref{invSob}, for $m=1+\epsilon$, $\epsilon\geq 1$ and $\gamma=2-\frac{2}{\kappa}$, we have
\begin{equation}\label{invMoser}
\fint_{Q_{j+1}}w^{\gamma m}\,dx\,dt
\leq C\left(\frac{m^4 2^{j(N+4)}}{(\sigma-\sigma')^2}\fint_{Q_j}w^m\,dx\,dt\right)^\gamma,
\end{equation}
for some positive constant $C=C(N,s,\Lambda)$. Now, we use Moser's iteration technique to prove the estimate \eqref{invest}. 
Let $m_j=2\gamma^j$,  $j=0,1,2,\dots$. By iterating \eqref{invMoser}, we have
\begin{equation}\label{sbigp}
\begin{split}
\left(\fint_{Q_0}w^2\,dx\,dt\right)^\frac{1}{2}
&\geq\left(\frac{\sigma-\sigma'}{C}\right)^{1+\gamma^{-1}+\gamma^{-2}+\cdots+\gamma^{1-k}}\frac{2^{\frac{(N+8)}{2}(\gamma^{-1}+\gamma^{-2}+\cdots+\gamma^{1-k})}}{\gamma^{2(\gamma^{-1}+2\gamma^{-2}+\cdots+(k-1)\gamma^{1-k})}}\left(\fint_{Q_k}w^{m_k}\,dx\,dt\right)^\frac{1}{m_k}.
\end{split}
\end{equation}
Letting $k\to\infty$ in \eqref{sbigp}, the result follows for $\beta=2$. If $0<\beta<1$, by Young's inequality
\begin{equation}\label{slessp}
\begin{split}
\esssup_{U^-(\sigma' r)}\,w&\leq\left(\frac{C}{(\sigma-\sigma')^{\theta}}\right)^\frac{1}{2}\left(\fint_{U^{-}(\sigma r)}w^2\,dx\,dt\right)^\frac{1}{2}\\
&\leq\left(\frac{2-\beta}{4}\esssup_{U^{-}(\sigma r)}w\right)^\frac{2-\beta}{2}\left(\frac{4}{2-\beta}\right)^\frac{2-\beta}{2}
\left(\frac{C}{(\sigma-\sigma')^{\theta}}\right)^\frac{1}{2}\left(\fint_{U^-(\sigma r)}w^\beta\,dx\,dt\right)^\frac{1}{2}\\
&\leq\frac{1}{2}\esssup_{U^-(\sigma r)}w+\left((2-\beta)^{\beta-2}\frac{C}{(\sigma-\sigma')^{\theta}}\right)^\frac{1}{\beta}\left(\fint_{U^-(\sigma r)}w^\beta\,dx\,dt\right)^\frac{1}{\beta}\\
&\leq\frac{1}{2}\esssup_{U^-(\sigma r)}w+\left(\frac{C}{(\sigma-\sigma')^{\theta}}\right)^\frac{1}{\beta}\left(\fint_{U^-(\sigma r)}w^\beta\,dx\,dt\right)^\frac{1}{\beta}.
\end{split}
\end{equation}
The result follows by a similar iteration argument as in \cite[Lemma $5.1$]{Giaq}.
\end{proof} 

We obtain a reverse H\"older inequality for weak supersolutions of the problem \eqref{Problem}.

\begin{Lemma}\label{revHolderlemma}
Assume that $u$ is a weak supersolution of \eqref{Problem} such that $u\geq 0$ in $B_R(x_0)\times(t_0,t_0+r^2)\subset\Om\times(0,T)$. 
Let $0<r\leq 1$ be such that $r<\frac{R}{2}$, $d>0$ and $w=u+l$, with
\begin{equation}\label{l2}
l\geq\Big(\frac{r}{R}\Big)^2\mathrm{Tail}_{\infty}\big(u_-;x_0,R,t_0,t_0+r^2\big)+d,
\end{equation}
where $\mathrm{Tail}_{\infty}$ is defined by \eqref{loctail}. Let $\gamma=2-\frac{2}{\kappa}$, where $\kappa>2$ as given by \eqref{kappa}. Then there exist positive constants $C=C(N,s,\Lambda,q)$ and $\theta=\theta(\kappa)$ such that
\begin{equation}\label{revHolderineq}
\left(\fint_{U^+(\sigma'r)}w^q\,dx\,dt\right)^\frac{1}{q}
\leq\left(\frac{C}{(\sigma-\sigma')^\theta}\right)^\frac{1}{\overline{q}}\left(\fint_{U^+(\sigma r)}w^{\overline{q}}\,dx\,dt\right)^\frac{1}{\overline{q}},
\end{equation}
for all $\frac{1}{2}\leq\sigma'<\sigma\leq 1$ and $0<\overline{q}<q<\gamma$ and
$U^+(\eta r)=B_{\eta r}(x_0)\times(t_0,t_0+(\eta r)^2)$, $\frac{1}{2}\leq\eta\leq 1$.
\end{Lemma}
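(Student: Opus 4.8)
The plan is to follow the blueprint of the proof of Lemma~\ref{invlemma}, but now working with small \emph{positive} powers of $w=u+l$ and running the Moser iteration \emph{downwards} in the exponent instead of towards the supremum. The point of departure is the energy estimate of Lemma~\ref{supenergy1}, applied to the weak supersolution $v=w=u+l$ with $l$ as in \eqref{l2}, on nested forward cylinders: for radii $r_{j+1}<r_j\le r$ one takes, in the notation of Lemma~\ref{supenergy1}, $\tau_1=t_0$, $\tau_2=t_0+r_{j+1}^2$, $\tau=r_j^2-r_{j+1}^2$, a spatial cut-off $\psi_j\in C_c^\infty(B_{r_j}(x_0))$ with $\psi_j\equiv1$ on $B_{r_{j+1}}(x_0)$ and $\mathrm{dist}(\mathrm{supp}\,\psi_j,\mathbb R^N\setminus B_{r_j}(x_0))\gtrsim 2^{-j}r$, and a temporal cut-off $\eta_j$ with $\eta_j\equiv1$ on $(t_0,t_0+r_{j+1}^2)$ and $\eta_j\equiv0$ for $t\ge t_0+r_j^2$. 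The tail and nonlocal terms on the right-hand sides of \eqref{reveng1}--\eqref{reveng2} are absorbed exactly as in the estimates $I_1,\dots,I_5$ of Lemma~\ref{invlemma}: the kernel integral $\esssup_x\int_{\mathbb R^N\setminus B_{r_j}}|x-y|^{-N-2s}\,dy$ produces a factor comparable to $r_j^{-2s}\le r_j^{-2}$, and the $u_-$--term is controlled by $l\ge(r/R)^2\mathrm{Tail}_\infty(u_-;x_0,R,t_0,t_0+r^2)$ just as in \eqref{estI_3}. This leaves, for every $\alpha\in(0,1)$, a Caccioppoli bound for $\int\!\!\int|\nabla w^{\alpha/2}|^2\psi_j^2\eta_j$ and a bound for $\esssup_t\int w^\alpha\psi_j^2\,dx$, both dominated by a constant times $\fint_{U^+(\sigma_j r)}w^\alpha\,dx\,dt$.

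Next I would feed these two bounds into the Gagliardo--Nirenberg--Sobolev inequality of Lemma~\ref{c.omega_sobo} through the slicing and interpolation argument of \eqref{invSob}: on each time slice split $w^{\gamma\alpha}=w^{\alpha/t}\,w^{\alpha\kappa/(2t')}$ with $t=\frac{\kappa}{\kappa-2}$ and $t'=\frac{\kappa}{2}$, estimate the first factor by the time-supremum bound, apply Sobolev to the compactly supported function $w^{\alpha/2}\psi_j\eta_j^{1/2}\in W_0^{1,2}$ for the second, and integrate in $t$; since $\frac{\alpha}{t}+\frac{\alpha\kappa}{2t'}=\alpha\bigl(2-\tfrac{2}{\kappa}\bigr)=\gamma\alpha$, this produces the one-step Moser inequality
\begin{equation*}
\fint_{U^+(\sigma_{j+1}r)}w^{\gamma\alpha}\,dx\,dt\leq\left(\frac{C\,2^{j(N+4)}}{(\sigma-\sigma')^2}\fint_{U^+(\sigma_j r)}w^{\alpha}\,dx\,dt\right)^{\gamma},
\end{equation*}
valid for all $\alpha\in(0,1)$, with $C=C(N,s,\Lambda,\alpha)$, where $\sigma_0>\sigma_1>\dots$ is a geometric sequence of radii between $\sigma'$ and $\sigma$ with $\sigma_j-\sigma_{j+1}\approx(\sigma-\sigma')\gamma^{-j}$, exactly as in \eqref{invMoser}. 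Note that a single application already multiplies the exponent by $\gamma$.

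Finally I would chain this inequality. Since $q<\gamma$ we have $q/\gamma<1$, so we may apply the one-step inequality with the decreasing exponents $q,\ q/\gamma,\ q/\gamma^2,\dots$, each of which lies in $(0,1)$. If $\overline{q}\ge q/\gamma$, a single application suffices: after raising to the power $1/q$ the right-hand side becomes $\bigl(\fint w^{q/\gamma}\bigr)^{\gamma/q}$, which by Jensen's inequality is at most $\bigl(\fint_{U^+(\sigma r)}w^{\overline{q}}\bigr)^{1/\overline{q}}$ because $q/\gamma\le\overline{q}$. If $\overline{q}<q/\gamma$, one iterates a finite number $k=k(\gamma,q,\overline{q})$ of steps until the exponent $q\gamma^{-k}$ drops below $\overline{q}$ and then applies Jensen once on the innermost cylinder, the ratios of Lebesgue measures being harmless since all radii are comparable to $r$. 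Summing the geometric series of exponents of $(\sigma-\sigma')$ and collecting the finitely many constants --- all depending only on $N,s,\Lambda$ and $q$ (and, in the second case, on the number of steps, hence on the ratio $q/\overline{q}$) --- yields \eqref{revHolderineq} with $\theta=\theta(\kappa)$.

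The genuinely new content beyond Lemma~\ref{invlemma} is minimal: the slicing--Sobolev computation is verbatim that of \eqref{invSob}--\eqref{invMoser}, and the Moser iteration degenerates to finitely many (often just one) steps because the energy estimate \eqref{reveng1}--\eqref{reveng2} is only available for powers in $(0,1)$. The main point that requires care is, as in Lemma~\ref{invlemma}, the absorption of the nonlocal and tail contributions, which is precisely what forces the lower bound \eqref{l2} on $l$; one should also check that the $\alpha$-dependent constants of Lemma~\ref{supenergy1}, which blow up as $\alpha\to0^+$ and $\alpha\to1^-$, stay under control along the finite iteration, where $\alpha$ ranges only over the finite set $\{q\gamma^{-1},q\gamma^{-2},\dots\}$.
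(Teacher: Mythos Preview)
Your proposal is correct and follows essentially the same approach as the paper: the same energy estimate from Lemma~\ref{supenergy1}, the same slicing--Sobolev computation leading to a one-step Moser inequality of the form \eqref{invMoser}, the same absorption of the nonlocal and tail contributions via the lower bound \eqref{l2} on $l$, and the same finite iteration. The only refinement worth noting is that the one-step constant does not actually blow up as $\alpha\to0^+$ (the prefactors $\alpha^2/(1-\alpha)$ and $2\alpha$ in \eqref{reveng1}--\eqref{reveng2} cancel the apparent singularities in $\zeta_2$ and $1/\alpha$), only as $\alpha\to1^-$; combined with the fact that the iterated constants appear through the convergent geometric series $\sum_j\gamma^{-j}$, this makes the final constant independent of the number of steps and hence of $\overline q$, matching the stated dependence $C=C(N,s,\Lambda,q)$.
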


\begin{proof}
We divide the interval $(\sigma,\sigma')$ into $k$ parts by setting
$$
\sigma_0=\sigma,\sigma_k=\sigma',\sigma_j=\sigma-(\sigma-\sigma')\frac{1-\gamma^{-j}}{1-\gamma^{-k}},
\quad j=1,\dots,k-1.
$$
We shall choose $k$ below. Denote $r_j=\sigma_j r$, $B_j=B_{r_j}(x_0)$,
$\Gamma_j=(t_0,t_0+r_j^2)$ and $Q_j=U^+(r_j)=B_j\times\Gamma_j$.
We choose $\psi_j\in C_{c}^\infty(B_j)$ and $\eta_j\in C^\infty(\mathbb{R})$ such that
$0\leq\psi_j\leq 1$ in $B_j$, $\psi_j\equiv 1$ in $B_{j+1}$,
$0\leq\eta_j\leq 1$ in $\Gamma_j$, $\eta_j(t)=1$ for every $t\leq t_0+r_{j+1}^2$,
$\eta_j(t)=0$ for every $t\geq t_0+r_j^{2}$,
$\mathrm{dist}\,\big(\mathrm{supp}\,\psi,\mathbb{R}^N\setminus B_j\big)\geq 2^{-j-1}r$,
\[
|\nabla\psi_j|\leq\frac{8\gamma^j}{r(\sigma-\sigma')}
\quad\text{and}\quad
\left|\frac{\partial\eta_j}{\partial t}\right|\leq 8\left(\frac{\gamma^j}{r(\sigma-\sigma')}\right)^2\text{ in }Q_j.
\]
Let $0<\epsilon<1$ and $\alpha=1-\epsilon$. Using Lemma \ref{c.omega_sobo} along with H\"older's inequality with exponents $t=\frac{\kappa}{\kappa-2}$ and $t'=\frac{\kappa}{2}$, for some positive constant $C=C(N)$, we have
\begin{equation}\label{revSob}
\begin{split}
&\fint_{Q_{j+1}}w^{\gamma\alpha}\,dx\,dt
=\fint_{\Gamma_{j+1}}\fint_{B_{j+1}}w^{\gamma \alpha}\,dx dt=\fint_{\Gamma_{j+1}}\fint_{B_{j+1}}w^{\frac{\alpha}{t}+\frac{\alpha\kappa}{2t'}}\,dx\,dt\\
&\leq\fint_{\Gamma_{j+1}}\frac{1}{|B_{j+1}|}\left(\int_{B_{j+1}}w^\alpha\,dx\right)^\frac{1}{t}\left(\int_{B_{j+1}}w^\frac{\alpha\kappa}{2}\,dx\right)^\frac{1}{t'}dt\\
&\leq\fint_{\Gamma_{j+1}}\frac{1}{|B_{j+1}|}\left(\esssup_{\Gamma_{j+1}}\int_{B_{j}}w^\alpha\psi_j^{2}\,dx \right)^\frac{1}{t}\left(\int_{B_{j}}w^\frac{\alpha\kappa}{2}\psi_j^\frac{(\alpha+2)\kappa}{2}\eta_j(t)^\frac{\kappa}{2}\,dx\right)^\frac{1}{t'}dt\\
&=\frac{|\Gamma_j||B_j|}{|\Gamma_{j+1}||B_{j+1}|}\left(\esssup_{\Gamma_{j+1}}\fint_{B_{j}}w^\alpha\psi_j^{2}\,dx \right)^\frac{1}{t}\fint_{\Gamma_{j}}\left(\fint_{B_{j}}w^\frac{\alpha\kappa}{2}\psi_j^\frac{(\alpha+2)\kappa}{2}\eta_j(t)^\frac{\kappa}{2}\,dx\right)^\frac{1}{t'}dt\\
&\leq C\left(\esssup_{\Gamma_{j+1}}\fint_{B_{j}}w^\alpha\psi_j^{2}\,dx\right)^\frac{1}{t}r^2\fint_{\Gamma_j}\fint_{B_j}\Big|\nabla\Big(w^\frac{\alpha}{2}\psi_j^\frac{\alpha+2}{2}\eta_j^\frac{1}{2}\Big)\Big|^2\,dx\,dt\\
&=\frac{C}{r_j^{\gamma N}}\left(\esssup_{\Gamma_{j+1}}\int_{B_{j}}w^\alpha\psi_j^{2}\,dx \right)^\frac{1}{t}
\int_{\Gamma_j}\int_{B_j}\left(\big|\nabla\big(w^\frac{\alpha}{2}\big)\big|^2\psi_j(x)^{\alpha+2}\eta_j(t)+\alpha^2 w^\alpha |\nabla\psi_j|^2\right)\,dx\,dt\\
&\leq\frac{C}{r_j^{\gamma N}}\left(\esssup_{\Gamma_{j+1}}\int_{B_{j}}w^\alpha\psi_j^{2}\,dx \right)^\frac{1}{t}
\int_{\Gamma_j}\int_{B_j}\left(\big|\nabla\big(w^\frac{\alpha}{2}\big)\big|^2\psi_j(x)^{2}\eta_j(t)+\alpha^2 w^\alpha |\nabla\psi_j|^2\right)\,dx\,dt.
\end{split}
\end{equation}
Let
\[
I=\esssup_{\Gamma_{j+1}}\int_{B_{j}}w^\alpha\psi_j^{2}\,dx\,dt,\quad 
J=\int_{\Gamma_j}\int_{B_j}\big|\nabla w^\frac{\alpha}{2}\big|^2\psi_j^{2}\eta_j\,dx\,dt
\]
and
\[
K=\alpha^2\int_{\Gamma_j}\int_{B_j}w^\alpha |\nabla\psi_j|^2\,dx\,dt.
\]
Setting $r=r_j, \tau_1=t_0, \tau_2=t_0+r_{j+1}^2$ and $\tau=r_j^2-r_{j+1}^2$ in Lemma \ref{inveng}, for some constant $C=C(\Lambda)>0$, we have
\begin{equation}\label{IJexprev}
\begin{split}
I,J&\leq \frac{C}{\epsilon^2}\Bigg[\int_{\Gamma_j}\int_{B_j}\int_{B_j}\big(\psi_j(x)-\psi_j(y)\big)^2\big(w(x,t)^\alpha+w(y,t)^\alpha\big)\eta_j(t)\,d\mu\,dt\\
&+2\esssup_{x\in\mathrm{supp}\psi_j}\int_{\mathbb{R}^N\setminus B_j}\frac{dy}{|x-y|^{N+2s}}\int_{\Gamma_j}\int_{B_j}w(x,t)^\alpha\psi_j(x)^{2}\eta_j(t)\,dx\,dt\\
&+\frac{2}{l}\esssup_{t\in\Gamma_j,\,x\in\mathrm{supp}\psi_j}\int_{\mathbb{R}^N\setminus B_R(x_0)}\frac{u_-(y,t)}{|x-y|^{N+2s}}\,dy\int_{\Gamma_j}\int_{B_j}w(x,t)^\alpha\psi_j(x)^{2}\eta_j(t)\,dx\,dt\\
&+\int_{\Gamma_j}\int_{B_j}w^{\alpha}|\nabla\psi_j|^2\eta_j\,dx\,dt
+\int_{\Gamma_j}\int_{B_j}w(x,t)^\alpha\psi_j(x)^2|\partial_{t}\eta_j(t)|\,dx\,dt\Bigg]\\
&=I_1+I_2+I_3+I_4+I_5.
\end{split}
\end{equation}
Proceeding as in the proof of Lemma \ref{invlemma}, we obtain
\begin{equation}\label{estI_1rev}
\begin{split}
I_1&=\frac{C}{\epsilon^2}\int_{\Gamma_j}\int_{B_j}\int_{B_j}\big(\psi_j(x)-\psi_j(y)\big)^2\big(w(x,t)^\alpha+w(y,t)^\alpha\big)\eta_j(t)\,d\mu\,dt\\
&\leq\frac{C\gamma^{2j}}{\epsilon^2 r_j^{2}(\sigma-\sigma')^2}\int_{\Gamma_j}\int_{B_j}w(x,t)^{\alpha}\,dx\,dt,
\end{split}
\end{equation}
and
\begin{equation}\label{estI_2rev}
\begin{split}
I_2&=\frac{C}{\epsilon^2}\esssup_{x\in\mathrm{supp}\,\psi_j}\int_{\mathbb{R}^N\setminus B_j}\frac{dy}{|x-y|^{N+2s}}\int_{\Gamma_j}\int_{B_j}w(x,t)^{\alpha}\psi_j(x)^{2}\eta_j(t)\,dx\,dt\\
&\leq\frac{C2^{j(N+2s)}}{\epsilon^2 r_j^{2}(\sigma-\sigma')^2}\int_{\Gamma_j}\int_{B_j}w(x,t)^{\alpha}\,dx\,dt,
\end{split}
\end{equation}
for some constant $C=C(N,s,\Lambda)>0$. Again arguing similarly as in the estimate of $I_3$ in the proof of Lemma \ref{invlemma} and noting \eqref{l2}, we have
\begin{equation}\label{estI_3rev}
\begin{split}
I_3&=\frac{C}{l\epsilon^2}\esssup_{t\in\Gamma_j,\,x\in\mathrm{supp}\psi_j}\int_{\mathbb{R}^N\setminus B_R(x_0)}\frac{u_-(y,t)}{|x-y|^{N+2s}}\,dy\int_{\Gamma_j}\int_{B_j}w(x,t)^{\alpha}\psi_j(x,t)^{2}\eta_j(t)\,dx\,dt\\
&\leq\frac{C\gamma^{2j}}{\epsilon^2 r_j^2(\sigma-\sigma')^2}\int_{\Gamma_j}\int_{B_j}w(x,t)^{\alpha}\,dx\,dt,
\end{split}
\end{equation}
\begin{equation}\label{estI_4rev}
I_4=\frac{C}{\epsilon^2}\int_{\Gamma_j}\int_{B_j}\int_{B_j}|\nabla\psi_j|^2 w(x,t)^{\alpha}\psi_j(x)^{\alpha}\eta_j(t)\,dx\,dt
\leq\frac{C\gamma^{2j}}{\epsilon^2 r_j^{2}(\sigma-\sigma')^2}\int_{B_j}w(x,t)^{\alpha}\,dx dt,
\end{equation}
\begin{equation}\label{estI_5rev}
I_5=\frac{C}{\epsilon^2}\int_{\Gamma_j}\int_{B_j}w(x,t)^{\alpha}\psi_j(x)^{2}|\partial_t\eta_j(t)|\,dx\,dt\\
\leq\frac{C\gamma^{2j}}{\epsilon^2 r_j^{2}(\sigma-\sigma')^2}\int_{\Gamma_j}\int_{B_j}w(x,t)^{\alpha}\,dx\,dt,
\end{equation}
for some positive constant $C=C(\Lambda,N,s)$. Plugging the estimates \eqref{estI_1rev}\, \eqref{estI_2rev}, \eqref{estI_3rev}, \eqref{estI_4rev} and \eqref{estI_5rev} in \eqref{IJexprev}, since $\gamma<2$, we obtain
\begin{equation}\label{estIJrev}
\begin{split}
I,J&\leq\frac{C2^{j(N+4)}}{\epsilon^2 r_j^{2}(\sigma-\sigma')^2}\int_{\Gamma_j}\int_{B_j}w(x,t)^{\alpha}\,dx\,dt.
\end{split}
\end{equation}
for some positive constant $C=C(N,s,\Lambda)$.
Using the properties of $\psi_j$, we obtain
\begin{equation}\label{estKrev}
\begin{split}
K&\leq\frac{C2^{2j}}{\epsilon^2 r_j^{2}(\sigma-\sigma')^2}\int_{\Gamma_j}\int_{B_j}w(x,t)^{\alpha}\,dx\,dt.
\end{split}
\end{equation}
for some positive constant $C=C(N,s,\Lambda)$. As in the proof of Lemma \ref{invlemma}, employing the estimates \eqref{estIJrev} and \eqref{estKrev} in \eqref{revSob}, for $\gamma=2-\frac{2}{\kappa}$, we have
\begin{equation}\label{revMoser}
\fint_{Q_{j+1}}w^{\gamma\alpha}\,dx\,dt
\leq C\left(\frac{2^{j(N+4)}}{(\sigma-\sigma')^2}\fint_{Q_j}w^\alpha\,dx\,dt\right)^\gamma,
\end{equation}
for some positive constant $C=C(N,s,\Lambda)$. Note that $C$ is independent of $\epsilon$ as long as $\alpha$ is away from $1$.
We use the Moser iteration technique to conclude the result. Fix $q$ and $\overline{q}$ such that $q>\overline{q}$ and $k$ such that $\overline{q}\gamma^{k-1}\leq q\leq\overline{q}\gamma^k$. 
Let $t_0$ be such that $t_0\leq\overline{q}$ and $q=\gamma^k t_0$. 
Let $t_j=\gamma^j t_0$, $j=0,1,\cdots,k$. 
By iterating \eqref{revMoser} and H\"older's inequality, we arrive at
\begin{equation}\label{revMoserfinal}
\begin{split}
\left(\fint_{Q_k}w^q\,dx\,dt\right)^\frac{1}{q}
&\leq\left(\frac{C^*}{(\sigma-\sigma')^\beta}\fint_{Q_0}w^{t_0}\,dx\,dt\right)^\frac{1}{t_0},\\
&\leq\left(\frac{C^*}{(\sigma-\sigma')^\beta}\right)^\frac{1}{t_0}\left(\fint_{Q_0}w^{\overline{q}}\,dx\,dt\right)^\frac{1}{\overline{q}},
\end{split}
\end{equation}
where
$$
C^*=2^{(N+4)(\gamma^{-1}+2\gamma^{-2}+\cdots+(k-1)\gamma^{1-k})}C^{\gamma^{-1}+\gamma^{-2}+\cdots+\gamma^{-k}}
$$
and
$$
\beta=2\big(1+\gamma^{-1}+\gamma^{-2}+\cdots+\gamma^{1-k}\big)=\frac{2\gamma}{\gamma-1}\big(1-\gamma^{-k}\big).
$$
Note that, due to the singularity of $\epsilon$ at $0$ in the estimates \eqref{estIJrev} and \eqref{estKrev}, the constant $C$ in \eqref{revMoserfinal} depends on $q$. It is easy to observe that $C^*$ and $\beta$ are uniformly bounded over $k$. Now, since $\overline{q}\gamma^{k-1}\leq t_0\gamma^k$, we have $t_0\geq\frac{\overline{q}}{\gamma}$. Hence, the result follows from \eqref{revMoserfinal}, with $\theta=\frac{2\gamma^2}{\gamma-1}$.
\end{proof}

Next, we prove the following logarithmic estimate for weak supersolutions of \eqref{Problem}.
\begin{Lemma}\label{Logestimatelemma}
Assume that $u$ is a weak supersolution of the problem \eqref{Problem} such that $u\geq 0$ in $B_R(x_0)\times(t_0-r^2,t_0+r^2)\subset\Om\times(0,T)$. 
Let $0<r\leq 1$ be such that $r<\frac{R}{2}$, $d>0$ and $v=u+l$, with
$$
l=\Big(\frac{r}{R}\Big)^2\mathrm{Tail}_{\infty}\big(u_-;x_0,R,t_0-r^2,t_0+r^2\big)+d,
$$
where $\mathrm{Tail}_{\infty}$ is defined by \eqref{loctail}. Then there exists a constant $C=C(N,s,\Lambda)>0$ such that
\begin{equation}\label{log1}
\big|U^{+}(r)\cap\{\log v<-\lambda-b\}\big|\leq \frac{Cr^{N+2}}{\lambda},
\end{equation}
where $U^+(r)=B_r(x_0)\times(t_0,t_0+r^2)$.
Moreover, there exists a constant $C=C(N,s,\Lambda)>0$ such that
\begin{equation}\label{log2}
\big|U^{-}(r)\cap\{\log v>\lambda-b\}\big|\leq \frac{Cr^{N+2}}{\lambda},
\end{equation}
where $U^-(r)=B_r(x_0)\times(t_0-r^2,t_0)$.
Here
\begin{equation}\label{b}
b=b\big(v(\cdot,t_0)\big)=-\frac{\int_{B_{\frac{3r}{2}}(x_0)}\log v(x,t_0)\psi(x)^2\,dx}{\int_{B_{\frac{3r}{2}}(x_0)}\psi(x)^2\,dx},
\end{equation}
where $\psi\in C_c^{\infty}\big(B_{\frac{3r}{2}}(x_0)\big)$ is a nonnegative, radially decreasing function such that $0\leq\psi\leq 1$ in $B_\frac{3r}{2}(x_0)$, $\psi\equiv 1$ \text{ in }$B_r(x_0)$, $|\nabla\psi|\leq\frac{C}{r}$ in $B_\frac{3r}{2}(x_0)$, for some constant $C>0$ independent of $r$.
\end{Lemma}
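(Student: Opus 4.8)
\emph{Proof plan.} The idea is to adapt Moser's logarithmic lemma to the mixed operator: the gradient energy is drawn from the local part $\Delta u$, the nonlocal part is treated as a lower order error, and the resulting differential inequality is combined with the weighted Poincar\'e inequality of Lemma~\ref{wgtPoin}. By Remark~\ref{defrmk} the function $v=u+l$ is again a weak supersolution of \eqref{Problem}, and $\phi(x,t)=v(x,t)^{-1}\psi(x)^{2}$ is a nonnegative test function admissible in \eqref{wksol}; its admissibility is justified by the exponential mollification in time of Remark~\ref{Molifier}, exactly as in the proof of Lemma~\ref{inveng}. Inserting $\phi$ and restricting to a time interval $(\tau_{1},\tau_{2})\subset(t_{0}-r^{2},t_{0}+r^{2})$, the parabolic term produces $\bigl(\int_{B_{3r/2}(x_0)}\psi^{2}\,dx\bigr)\bigl(W(\tau_{2})-W(\tau_{1})\bigr)$ with
\[
W(t)=\Bigl(\int_{B_{3r/2}(x_{0})}\psi^{2}\,dx\Bigr)^{-1}\int_{B_{3r/2}(x_{0})}\log v(x,t)\,\psi(x)^{2}\,dx ;
\]
the local term equals $-\int\psi^{2}|\nabla\log v|^{2}+2\int\psi\nabla\psi\cdot\nabla\log v\le-\tfrac12\int\psi^{2}|\nabla\log v|^{2}+2\int|\nabla\psi|^{2}$ by Young's inequality; and the nonlocal term, using the symmetry of $d\mu$ and $\psi\in C_{c}^{\infty}(B_{3r/2}(x_{0}))$, splits into an integral over $B_{3r/2}(x_{0})\times B_{3r/2}(x_{0})$ and a tail integral over $B_{3r/2}(x_{0})\times(\R^{N}\setminus B_{3r/2}(x_{0}))$.

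Next I would estimate the errors. Writing $a=v(x)$, $b=v(y)$, $\sigma=\psi(x)$, $\tau=\psi(y)$, the near-diagonal integrand is $(a-b)(\sigma^{2}a^{-1}-\tau^{2}b^{-1})=\sigma^{2}+\tau^{2}-(\sigma^{2}b/a+\tau^{2}a/b)\le(\sigma-\tau)^{2}$ by the arithmetic--geometric mean inequality, so by \eqref{kernel} and $0<r\le1$ this part is at most $C(N,s,\Lambda)\,r^{N-2s}\le C(N,s,\Lambda)\,r^{N-2}$. For the tail part one uses $\bigl(v(x)-v(y)\bigr)v(x)^{-1}=1-v(y)/v(x)\le1+u_{-}(y)/l$ (since $v\ge l$ on $B_{R}(x_{0})$), the fact that $|x-y|$ is comparable to $|y-x_{0}|$ when $x\in\mathrm{supp}\,\psi$ and $y\notin B_{3r/2}(x_{0})$, and that $u_{-}\equiv0$ on $B_{R}(x_{0})$, together with the bound $l\ge(r/R)^{2}\mathrm{Tail}_{\infty}(u_{-};x_{0},R,t_{0}-r^{2},t_{0}+r^{2})$ and Definition~\ref{def.tail}; this contributes at most $C(N,s,\Lambda)r^{N-2}$ as well, and likewise $2\int_{B_{3r/2}}|\nabla\psi|^{2}\le Cr^{N-2}$. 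Since all error terms are bounded, per unit time, by $C(N,s,\Lambda)r^{N-2}$, we arrive at
\[
\frac{d}{dt}\int_{B_{3r/2}(x_{0})}\psi^{2}\log v\,dx\ \ge\ \frac12\int_{B_{3r/2}(x_{0})}\psi^{2}|\nabla\log v|^{2}\,dx-Cr^{N-2},
\]
which in particular shows that $W$ is \emph{essentially nondecreasing}: $W(\tau_{2})\ge W(\tau_{1})-C(\tau_{2}-\tau_{1})/r^{2}$ for $\tau_{1}<\tau_{2}$.

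Since $\psi^{2}$ is radially decreasing, Lemma~\ref{wgtPoin} gives $\int_{B_{3r/2}}|\log v(\cdot,t)-W(t)|^{2}\psi^{2}\le Cr^{2}\int_{B_{3r/2}}|\nabla\log v(\cdot,t)|^{2}\psi^{2}$ for a.e.\ $t$, and combining with the differential inequality yields $\int_{B_{3r/2}}|\log v-W(t)|^{2}\psi^{2}\,dx\le Cr^{2}\,\tfrac{d}{dt}\!\int_{B_{3r/2}}\psi^{2}\log v\,dx+Cr^{N}$. To prove \eqref{log2}, fix $\lambda>0$; we may assume $\lambda$ exceeds a constant $c_{0}=c_{0}(N,s,\Lambda)$, since otherwise the bound is trivial because $|U^{-}(r)|\le Cr^{N+2}$. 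Put $g(t)=W(t_{0})-W(t)$, so $g(t_{0})=0$ and, by the monotonicity just noted, $g(t)\ge-c_{0}$ on $(t_{0}-r^{2},t_{0})$, whence $\lambda+g(t)\ge\lambda/2>0$ there. Since $b=-W(t_{0})$ and $\psi\equiv1$ on $B_{r}(x_{0})$, the $t$-slice of $U^{-}(r)\cap\{\log v>\lambda-b\}$ is $\Omega_{t}=\{x\in B_{r}(x_{0}):\log v(x,t)-W(t)>\lambda+g(t)\}$, so
\[
(\lambda+g(t))^{2}|\Omega_{t}|\le\int_{B_{3r/2}(x_{0})}|\log v-W(t)|^{2}\psi^{2}\,dx\le C r^{2}\,\frac{d}{dt}\!\int_{B_{3r/2}(x_{0})}\!\psi^{2}\log v\,dx+Cr^{N}.
\]
Dividing by $(\lambda+g(t))^{2}$, integrating over $(t_{0}-r^{2},t_{0})$, and using $\tfrac{d}{dt}\!\int\psi^{2}\log v\,dx=-\bigl(\int\psi^{2}\bigr)g'(t)$, $\int\psi^{2}\le Cr^{N}$, the telescoping identity $\int_{t_{0}-r^{2}}^{t_{0}}\tfrac{-g'(t)}{(\lambda+g(t))^{2}}\,dt=\tfrac{1}{\lambda}-\tfrac{1}{\lambda+g(t_{0}-r^{2})}\le\tfrac1\lambda$, and $\int_{t_{0}-r^{2}}^{t_{0}}\tfrac{dt}{(\lambda+g(t))^{2}}\le\tfrac{4r^{2}}{\lambda^{2}}$, one obtains $|U^{-}(r)\cap\{\log v>\lambda-b\}|=\int_{t_{0}-r^{2}}^{t_{0}}|\Omega_{t}|\,dt\le Cr^{N+2}/\lambda$, which is \eqref{log2}. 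The estimate \eqref{log1} follows in the same way, applied to $-\log v$ on the forward slab $(t_{0},t_{0}+r^{2})$, on which $-W$ is essentially nonincreasing.

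\emph{Expected main difficulty.} Two points need care. First, the manipulations involving $\tfrac{d}{dt}\!\int\psi^{2}\log v\,dx$ and the telescoping integral of $g'$ must be justified via the mollification $v_{h}$ of Remark~\ref{Molifier}, as in Lemma~\ref{inveng}, since a weak supersolution has only a weak time derivative; in the limit the relevant function of $t$ is absolutely continuous, which legitimizes these steps. Second, and conceptually the crucial point, one must retain the \emph{sharp} factor $(\lambda+g(t))^{2}$ in the measure estimate rather than the cruder $\lambda^{2}/4$: this is precisely what makes the time integral telescope to the rate $1/\lambda$ and, importantly, removes any need for a two-sided (upper) control of $W$ over the slab, which is not available for a mere supersolution. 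The tail estimate itself is routine once the lower bound on $l$ in terms of $\mathrm{Tail}_{\infty}$ is exploited.
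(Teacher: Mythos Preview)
Your proposal is correct and follows essentially the same route as the paper: test with $\phi=\psi^{2}v^{-1}$, control the local part by Young's inequality, bound the near-diagonal and tail nonlocal contributions by $C(t_{2}-t_{1})r^{N-2}$, apply the weighted Poincar\'e inequality (Lemma~\ref{wgtPoin}), and run Moser's level-set/telescoping argument. The only organizational difference is that the paper absorbs the $Cr^{N-2}$ error into a linear-in-time shift $\overline{W}(t)=W(t)-A_{2}r^{-2}(t-t_{1})$, which makes $\overline{W}$ genuinely monotone and thereby cleanly justifies the a.e.\ differentiability and the telescoping step you flag as a difficulty; your variant, keeping the error separate and bounding $\int(\lambda+g)^{-2}\,dt$ via $\lambda+g\ge\lambda/2$, is equivalent once you note that $W(t)+Cr^{-2}t$ is nondecreasing.
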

\begin{proof}
We only prove the estimate \eqref{log1}, since the proof of \eqref{log2} follows similarly. Without loss of generality, we assume that $x_0=0$ and denote by $B_r=B_r(0)$, $B_\frac{3r}{2}=B_\frac{3r}{2}(0)$. Since $v$ is a weak supersolution of \eqref{Problem}, choosing $\phi(x,t)=\psi(x)^2 v(x,t)^{-1}$ as a test function in \eqref{wksol} (which can again be justified as in the proof of Lemma \ref{inveng}), we get
\begin{equation}\label{log1eqn}
I_1+I_2+2I_3+I_4\geq 0,
\end{equation}
where for any $t_0\leq t_1<t_2\leq t_0+r^{2}$, we have
\begin{equation}\label{logI1}
I_1=\int_{B_{\frac{3r}{2}}}\log v(x,t)\psi(x)^2\,dx\Big|_{t=t_1}^{t_2}.
\end{equation}
Following the arguments as in the proof of \cite[Lemma 1.3]{Kuusilocal}, for some constant $C=C(N,s,\Lambda)>0$, we obtain 
\begin{equation}\label{logI2}
\begin{split}
I_2&=\int_{t_1}^{t_2}\int_{B_{\frac{3r}{2}}}\int_{B_{\frac{3r}{2}}}\mathcal{A}\big(v(x,y,t)\big)\big(\psi(x)^2 v(x,t)^{-1}-\psi(y)^2 v(y,t)^{-1}\big)\,d\mu\, dt\\
&\leq -\frac{1}{C}\int_{t_1}^{t_2}\int_{B_{\frac{3r}{2}}}\int_{B_{\frac{3r}{2}}}K(x,y,t)|\log v(x,t)-\log v(y,t)|^2 \psi(y)^2\,dx\, dy\, dt\\
&\qquad+C\int_{t_1}^{t_2}\int_{B_{\frac{3r}{2}}}\int_{B_{\frac{3r}{2}}}K(x,y,t)|\psi(x)-\psi(y)|^2\,dx\, dy\, dt\\
&\leq -\frac{1}{C}\int_{t_1}^{t_2}\int_{B_{\frac{3r}{2}}}\int_{B_{\frac{3r}{2}}}K(x,y,t)|\log v(x,t)-\log v(y,t)|^2 \psi(y)^2\,dx\, dy\, dt\\
&\qquad+C(t_2-t_1)r^{N-2}, 
\end{split}
\end{equation}
where the last inequality is obtained using the properties of $\psi$ and the fact that $0<r\leq 1$. Noting that $v\geq l$ in $B_R(x_0)\times(t_0,t_0+r^2)$ and arguing as in the proof of the estimate \eqref{estI2}, we have
\begin{equation}\label{logI3}
\begin{split}
I_3&=\int_{t_1}^{t_2}\int_{\mathbb{R}^N\setminus B_{\frac{3r}{2}}}\int_{B_{\frac{3r}{2}}}\mathcal{A}\big(v(x,y,t)\big)\psi(x)^2 v(x,t)^{-1}\,d\mu\, dt\\
&\leq C(t_2-t_1)r^{N-2}+\frac{2\Lambda}{l}\int_{t_1}^{t_2}\int_{\mathbb{R}^N\setminus B_R(x_0)}\int_{B_\frac{3r}{2}(x_0)}\frac{u_-(y,t)}{|x-y|^{N+2s}}\,dx\,dy\,dt\\
&\leq C(t_2-t_1)r^{N-2}+\frac{2\Lambda}{l}(t_2-t_1)r^N R^{-2}\mathrm{Tail}_{\infty}\big(u_-;x_0,R,t_0,t_0+r^2\big)\\
&\leq C(t_2-t_1)r^{N-2},
\end{split}
\end{equation}
for some constant $C=C(N,s,\Lambda)>0$.
Using Young's inequality, for some constant $C=C(N,s,\Lambda)>0$, we obtain
\begin{equation}\label{logI4}
\begin{split}
I_4&=\int_{t_1}^{t_2}\int_{B_{\frac{3r}{2}}}\nabla v\nabla\big(\psi^2 v^{-1}\big)\,dx\,dt\\
&\leq-\frac{1}{2}\int_{t_1}^{t_2}\int_{B_{\frac{3r}{2}}}|\nabla\log v|^2\psi^2\,dx\,dt
+C\int_{t_1}^{t_2}\int_{B_{\frac{3r}{2}}}|\nabla\psi|^2\,dx\,dt\\
&\leq-\frac{1}{2}\int_{t_1}^{t_2}\int_{B_{\frac{3r}{2}}}|\nabla\log \,v|^2\psi^2\,dx\,dt+C(t_2-t_1)r^{N-2}. 
\end{split}
\end{equation}
Therefore using the estimates \eqref{logI1}, \eqref{logI2}, \eqref{logI3} and \eqref{logI4} in \eqref{log1eqn}, we obtain
\begin{equation}\label{log2old}
\begin{split}
&\frac{1}{C}\int_{t_1}^{t_2}\int_{B_{\frac{3r}{2}}}\int_{B_{\frac{3r}{2}}}K(x,y,t)|\log u(x,t)-\log u(y,t)|^2 \psi(y)^2\,dx\, dy\, dt\\
&\quad+\int_{t_1}^{t_2}\int_{B_{\frac{3r}{2}}}|\nabla\log v|^2\psi^2\,dx\,dt
-\int_{B_{\frac{3r}{2}}}\log v(x,t)\psi(x)^2\,dx\Big|_{t=t_1}^{t_2}\leq C(t_2-t_1)r^{N-2},
\end{split}
\end{equation}
which gives the estimate,
\begin{equation}\label{log2eqn}
\int_{t_1}^{t_2}\int_{B_{\frac{3r}{2}}}|\nabla\log v|^2\psi^2\,dx\,dt-\int_{B_{\frac{3r}{2}}}\log v(x,t)\psi(x)^2\,dx\Big|_{t=t_1}^{t_2}\leq C(t_2-t_1)r^{N-2}.
\end{equation}
Let $w(x,t)=-\log v(x,t)$ and
$$
W(t)=\frac{\int_{B_{\frac{3r}{2}}}w(x,t)\psi(x)^2\,dx}{\int_{B_{\frac{3r}{2}}}\psi(x)^2\,dx}.
$$
Since $0\leq\psi\leq 1$ in $B_{\frac{3r}{2}}$ and $\psi\equiv 1$ in $B_r$, we obtain $\int_{B_{\frac{3r}{2}}}\psi(x)^2\,dx\approx r^N$. Then by Lemma \ref{wgtPoin}, for some positive constant $C_1>0$ (independent of $r$), we obtain
\begin{equation}\label{wgtPoinapp}
\frac{1}{r^2}\fint_{B_\frac{3r}{2}}|w-W(t)|^2\psi^2\,dx\leq C_1\frac{\int_{B_\frac{3r}{2}}|\nabla w|^2\psi^2\,dx}{\int_{B_\frac{3r}{2}}\psi^2\,dx}.
\end{equation}
By dividing through by $\int_{B_{\frac{3r}{2}}}\psi^2\,dx$ on both sides of \eqref{log2eqn} and using \eqref{wgtPoinapp} together with the fact that $\psi\equiv 1$ in $B_r$, we get
$$
W(t_2)-W(t_1)+\frac{1}{C_1 r^{2}}\int_{t_1}^{t_2}\fint_{B_r}|w(x,t)-W(t)|^2\,dx\, dt\leq C(t_2-t_1)r^{-2}.
$$
Let $A_1=C_1$, $A_2=C$, $\overline{w}(x,t)=w(x,t)-A_2 r^{-2}(t-t_1)$ and $\overline{W}(t)=W(t)-A_2 r^{-2}(t-t_1)$.
Then $w(x,t)-W(t)=\overline{w}(x,t)-\overline{W}(t)$.
Hence we get
\begin{equation}\label{monotone1}
\overline{W}(t_2)-\overline{W}(t_1)+\frac{1}{A_1 r^{N+2}}\int_{t_1}^{t_2}\int_{B_r}|\overline{w}(x,t)-{\overline{W}(t)}|^2\, dx\, dt\leq 0.
\end{equation}
This shows that $\overline{W}(t)$ is a monotone decreasing function in $(t_1,t_2)$. Hence  $\overline{W}(t)$ is differentiable almost everywhere with respect to $t$. Hence, from \eqref{monotone1} for almost every $t$ such that $t_1<t<t_2$, we obtain
\begin{equation}\label{notime}
\overline{W}'(t)+\frac{1}{A_1 r^{N+2}}\int_{B_r}\big|\overline{w}(x,t)-\overline{W}(t)\big|^2\,dx\leq 0.
\end{equation}
Setting $t_1=t_0$ and $t_2=t_0+r^2$, we have $\overline{W}(t_0)=W(t_0)$ and we denote by $b\big(v(\cdot,t_0)\big)=\overline{W}(t_0)$. 
Let
$$
\Omega_{t}^{+}(\lambda)=\big\{x\in B_r:\overline{w}(x,t)>b+\lambda\big\}.
$$
For every $t\geq t_0$, we have $\overline{W}(t)\leq \overline{W}(t_0)=b$. Thus $x\in\Omega_{t}^{+}(\lambda)$ gives
\[
\overline{w}(t,x)-\overline{W}(t)\geq b+\lambda-\overline{W}(t)\geq b+\lambda-\overline{W}(t_0)=\lambda>0.
\]
Hence from \eqref{notime}, we have 
$$
\overline{W}'(t)+\frac{|\Omega_{t}^{+}(\lambda)|}{A_1 r^{N+2}}\big(b+\lambda-\overline{W}(t)\big)^2\leq 0.
$$
Therefore, we have
$$
|\Omega_{t}^{+}(\lambda)|\leq-A_1 r^{N+2}\partial_{t}\big(b+\lambda-\overline{W}(t)\big)^{-1}.
$$
Integrating over $t_0$ to $t_0+r^{2}$, we obtain
$$
\big|\{(x,t)\in B_r\times(t_0,t_0+r^{2}):\overline{w}(x,t)>b+\lambda\}\big|\leq-{A_1 r^{N+2}}\int_{t_0}^{t_0+r^{2}}\partial_{t}\big(b+\lambda-\overline{W}(t)\big)^{-1}\,dt,
$$
which gives
\begin{equation}\label{logsubpositivetime}
\big|\{(x,t)\in B_r\times(t_0,t_0+r^{2}):\log v(x,t)+A_2 r^{-2}(t-t_0)<-\lambda-b\}\big|\leq{A_1}\frac{r^{N+2}}{\lambda}.
\end{equation}
Finally we obtain
\begin{equation}\label{logpostivetime}
\big|\{(x,t)\in B_r\times(t_0,t_0+r^{2}):\log v(x,t)<-\lambda-b\}\big|\leq A+B,
\end{equation}
where using \eqref{logsubpositivetime}, for some positive constant $C=C(N,s,\Lambda)$, we obtain
$$
A=\big|\{(x,t)\in B_r\times(t_0,t_0+r^{2}):\log v(x,t)+A_2 r^{-2}(t-t_0)<-\tfrac{\lambda}{2}-b\}\big|
\leq \frac{Cr^{N+2}}{\lambda}, 
$$
and
$$
B=\big|\{(x,t)\in B_r\times(t_0,t_0+r^{2}):A_2 r^{-2}(t-t_0)>\tfrac{\lambda}{2}\}\big|
\leq\left(1-\frac{\lambda}{2A_2}\right)r^{N+2}.  
$$
If $\frac{\lambda}{2 A_2}<1$, then 
$$B\leq\left(1-\frac{\lambda}{2A_2}\right)r^{N+2}<r^{N+2}<\left(\frac{2 A_2}{\lambda}\right)r^{N+2}.$$
If $\frac{\lambda}{2 A_2}\geq 1$, then $B\leq 0$. Hence in any case we have
$$
B\leq\frac{Cr^{N+2}}{\lambda},
$$
for some positive constant $C=C(N,s,\Lambda)$. Inserting the above estimates on $A$ and $B$ into \eqref{logpostivetime}, we obtain
\begin{equation*}\label{logpostivetimefinal}
\big|\{(x,t)\in B_r\times(t_0,t_0+r^{2}):\log v(x,t)<-\lambda-b\}\big|\leq\frac{Cr^{N+2}}{\lambda},
\end{equation*}
for some positive constant $C=C(N,s,\Lambda)$, which proves the estimate \eqref{log1}.
\end{proof}

\section{Proof of the main result}
\textbf{Proof of Theorem \ref{mainthm}:} For $\frac{1}{2}\leq\eta\leq 1$, denote
$$
V^+(\eta r)=B_{\eta r}(x_0)\times(t_0+r^2-(\eta r)^2,t_0+r^2)
\quad\text{and}\quad
V^-(\eta r)=B_{\eta r}\times(t_0-r^2,t_0-r^2+(\eta r)^2).
$$
Then $V^+(r)=B_r(x_0)\times(t_0,t_0+r^2)=U^+(r)$ and $V^-(r)=B_r(x_0)\times(t_0-r^2,t_0)=U^-(r)$.
Observe that $V^-$ and $V^+$ are sequences of nondecreasing cylinders over the interval $[\frac{1}{2},1]$. Let $u$ be as given in the hypothesis and for any $d>0$, assume that
$v=u+l$,
where
$$
l=\Big(\frac{r}{R}\Big)^2\mathrm{Tail}_{\infty}\big(u_-;x_0,R,t_0-r^2,t_0+r^2\big)+d,
$$
where $\mathrm{Tail}_{\infty}$ is defined by \eqref{loctail}. 
We denote $w_1=e^{-b}v^{-1}$ and $w_2=e^{b}v$, where $b$ is given by \eqref{b}. 
By applying [\eqref{log1}, Lemma \ref{Logestimatelemma}], for any $\lambda>0$ with some constant $C=C(N,s,\Lambda)>0$, we have
\begin{equation}\label{logapp1}
\left|V^+(r)\cap\{\log w_1>\lambda\}\right|\leq\frac{C|V^+(\frac{r}{2})|}{\lambda}.
\end{equation}
Moreover, using Lemma \ref{invlemma}, for any $0<\beta<1$, there exists some constant $C=C(N,s,\Lambda)>0$ such that
\begin{equation}\label{invlemapp}
\esssup_{V^+(\sigma' r)}\,w_1\leq\left(\frac{C}{(\sigma-\sigma')^{\theta}}\fint_{V^+(\sigma r)}w_1^{\beta}\,dx\,dt\right)^\frac{1}{\beta},
\end{equation}
for $\frac{1}{2}\leq\sigma'<\sigma\leq 1$. Therefore, using \eqref{logapp1} and \eqref{invlemapp} in Lemma \ref{Bombieri}, we have
\begin{equation}\label{Bombieriapp1}
\esssup_{V^+(\frac{r}{2})}\,w_1\leq C_1,
\end{equation}
for some constant $C_1=C_1(\theta,N,s,\Lambda)>0$.
By \eqref{log2} in Lemma \ref{Logestimatelemma} there exists a constant $C=C(N,s,\Lambda)>0$ such that
\begin{equation}\label{logapp2}
\left|V^-(r)\cap\{\log w_2>\lambda\}\right|\leq\frac{C|V^-(\frac{r}{2})|}{\lambda}
\end{equation}
for every $\lambda>0$.
By Lemma \ref{revHolderlemma} there exists a constant $C=C(N,s,\Lambda,q)>0$ such that
\begin{equation}\label{revHolderapp}
\left(\fint_{V^-(\sigma' r)}\,w_2^q\,dx\,dt\right)^\frac{1}{q}
\leq\left(\frac{C}{(\sigma-\sigma')^{\theta}}\fint_{V^-(\sigma r)}w_2^{\overline{q}}\,dx\,dt\right)^\frac{1}{\overline{q}},
\end{equation}
for $\frac{1}{2}\leq\sigma'<\sigma\leq 1$ and $0<\overline{q}<q<2-\frac{2}{\kappa}$, where $\kappa$ is given by \eqref{kappa}. Therefore, using \eqref{logapp2} and \eqref{revHolderapp} in Lemma \ref{Bombieri}, we have
\begin{equation}\label{Bombieriapp2}
\left(\fint_{V^-(\frac{r}{2})}w_2^q\,dx dt\right)^\frac{1}{q}\leq C_2,
\end{equation}
for some constant $C_2=C_2(\theta,N,s,\Lambda,q)>0$. Multiplying \eqref{Bombieriapp1} and \eqref{Bombieriapp2}, for any $0<q<2-\frac{2}{\kappa}$, we have 
\begin{equation}\label{fwkest}
\left(\fint_{V^-(\frac{r}{2})}v^q\,dx\,dt\right)^\frac{1}{q}
\leq C_1 C_2\essinf_{V^+(\frac{r}{2})}\,v\leq C_1 C_2\Big( \essinf_{V^+(\frac{r}{2})}\,u+l+d\Big).
\end{equation}
Since $d>0$ is arbitrary, the estimate \eqref{thm1ine} follows from \eqref{fwkest}. \qed

\end{document}